\definecolor{Red}{cmyk}{0,1,1,0.2}
\newcommand{\R}{\mathbb R}
\def\R{\mathbb R}
\newcommand{\be}{\begin{equation}}
\newcommand{\ee}{\end{equation}}
\def\1{{\bf 1}}
\def\ds{\displaystyle}
\newtheorem{Theorem}{Theorem}[section]
\newtheorem{Definition}[Theorem]{Definition}
\newtheorem{Proposition}[Theorem]{Proposition}
\newtheorem{Remark}[Theorem]{Remark}
\begin{document}
\title[Nonlinear PDE posed on networks]{Viscosity solutions posed on star-shaped network with Kirchhoff's boundary condition: well posedness}
\space
\space
\author[Isaac Ohavi]{Isaac Ohavi $^\aleph$
\\
$^\aleph$Hebrew University of Jerusalem, Department of  Mathematics and Statistics, Israël}
\address{ Hebrew University of Jerusalem, Department of Mathematics and Statistics, Israël}
\email{isaac.ohavi@mail.huji.ac.il \& isaac.ohavi@gmail.com}
\dedicatory{Version: \today}
\maketitle
\begin{abstract} The aim of this work is to establish the well-posedness of fully nonlinear partial differential equations (PDE) posed on a star-shaped network, having nonlinear Kirchhoff's boundary condition at the vertex, and possibly degenerate. We obtain a comparison theorem, for discontinuous viscosity solutions, following the recent ideas obtained in \cite{Ohavi visco} for second order problems, building test functions at the vertex solutions of Eikonal equations with well-designed coefficients. Another strong result obtained in this contribution is to show that any generalized Kirchhoff's viscosity solution introduced by Lions-Souganidis in \cite{Lions Souganidis 1}-\cite{Lions Souganidis 2} is indeed a Kirchhoff's viscosity solution. In other terms, the values of the Hamiltonians are not required at the vertex in the analysis of these types of PDE systems.
\end{abstract}
{\small \textbf{Key words:}  Discontinuous Hamilton-Jacobi-Bellman equations, Kirchhoff's boundary condition, Comparison principle, Viscosity solutions.}\\
{\small \textbf{AMS Class:} 35F21, 49L25, 35B51, 49L20.}
\section{Introduction}

We are given an integer number $I$ (with $I\ge 2$) and a star-shaped compact network: $$\displaystyle \mathcal{N}_R=\bigcup_{i=1}^I\mathcal{R}_i,$$ that consists of $I$ compact rays $\mathcal{R}_i\cong [0,R]$ ($R>0$) emanating from a junction point $\bf 0$. We set in the rest of this work $[I]=\{1,\ldots,I\}$.
More precisely, the system involved here is the following:
\label{sec intro}
\begin{eqnarray}\label{eq PDE 0}
&\begin{cases}
\textbf{Second order HJB equations on each ray :}\\ 
H_i\big(x,u_i(x),\partial_xu_i(x),\partial_x^2u_i(x)\big)=0,~~x\in(0,R),\\
\textbf{Nonlinear Kirchhoff's boundary condition at }\bf 0:\\
F\big(u(0),\partial_xu_1(0),\dots,\partial_xu_I(0)\big)=0,\\
\textbf{Dirichlet's boundary condition at }x=R:\\
u_i(R)=a_i,~~\forall i\in [I].
\end{cases}
\end{eqnarray}
Note that in all of this work, in order to simplify our study, we have assumed in our framework that all the rays $\mathcal{R}_i=[0,R]\times \{i\},~i\in[I]$, have the same length $R>0$. A more general setting could be treated with similar tools: one could for instance, consider more general networks and/or a mix of Kirchhoff's and Dirichlet boundary conditions at the vertexes.

In this contribution, our concern is to obtain the well-posedness of the system \eqref{eq PDE 0} in the second order possibly degenerate framework. We are able to deal with equations whose nature at the junctions ranges from first-order Hamilton Jacobi Equations to uniformly elliptic second-order. Our viscosity solutions will be assumed to be discontinuous, that is l.s.c. for super solutions (resp. u.s.c for sub solutions). We establish three main results, which are:\\
-any generalized Kirchhoff's viscosity solution introduced by Lions-Souganidis in \cite{Lions Souganidis 1}-\cite{Lions Souganidis 2}, is indeed a Kirchhoff's viscosity solution,\\
-a comparison theorem,\\
-existence and uniqueness of continuous viscosity solution for the system \eqref{eq PDE 0}.

We follow the recent ideas obtained in \cite{Ohavi visco} for second order strictly elliptic problems, for the construction of test functions at the vertex. These test functions for super solutions (resp. sub solutions) denoted $\overline{\phi}$ (resp. $\underline{\phi}$), are indeed solutions of Eikonal equations system posed on a star-shaped network, with coefficients that may be viewed as a kind of envelope of all possible errors of the Hamiltonians, and well-chosen boundary conditions. The expressions of the test functions $(\overline{\phi},\underline{\phi})$ will be determinant to answer all the problems stated in this contribution. Let us explain.

First, these last test functions are constructed so that the signs of their Hamiltonians at the junction point imply that only the Kirchhoff's condition is verified at the vertex, in the definition of generalized Kirchhoff's solutions of Lions-Souganidis in \cite{Lions Souganidis 1}-\cite{Lions Souganidis 2}. Moreover, we will show that they have an optimal gradient, in the sense that: any test functions for a super/sub solution at $\bf 0$, have a gradient lower/upper bounded by the gradient of $(\overline{\phi}/\underline{\phi})$ at $\bf 0$. This then will imply that generalized Kirchhoff's viscosity solutions are indeed nothing more than Kirchhoff's viscosity solutions. This result appears as a major advance in the formulation of viscosity solutions posed on network. Beyond showing that a weak boundary condition is a strong one, it also allows to avoid all the discontinuities induced by the Hamiltonians at $\bf 0$. The discontinuities induced by the Hamiltonians at $\bf 0$ appeared as the main technical difficulty in the analysis of PDE systems of type \eqref{eq PDE 0} the last decade, since the doubling method variable fails in this case.
Finally, the last main result of this contribution will be to establish a comparison theorem using once again the expressions of $(\overline{\phi}/\underline{\phi})$. Note that existence will be performed using classical arguments of Perron's method.

We will first investigate the first order case in Section \ref{sec 1 ordre} for system \eqref{eq PDE 0}, to simplify the calculations and give the readers the best intuitions for the sake of clarity. We will see that, up to some modifications, our results can be extended easily to the second order possibly degenerate framework.
Notably, the results of this work do not require any convexity and interpretation with control theory. We use pure PDE arguments, and classical tools in the field of viscosity theory can generalize the main results of this contribution to the parabolic framework for system \eqref{eq PDE 0}.

Let us now give an account of the main works that have been done in literature for similar systems close to \eqref{eq PDE 0}.

We first refer to the main results contained in \cite{Ohavi visco} that have given us the seeds of the techniques used in this paper. Therein, the first result in literature that gives a comparison theorem for fully nonlinear HJB systems (strictly elliptic) of type \eqref{eq PDE 0} was proved, with a more general boundary condition called: \textit{nonlinear local time Kirchhoff's boundary condition}. Notably, the author uses an original technique consisting of introducing a local time variable that drives the system only at the junction point $\bf 0$. The main idea of the proof of the comparison theorem is to build test function with local time derivative that absorbs at $\bf 0$ the error term induced by the \textit{Kirchhoff's speed of the Hamiltonians}.
Also note that the  \textit{nonlinear local time Kirchhoff's boundary condition} has been used in  \cite{Ohavi visco} to show a comparison theorem for non degenerated systems of type \eqref{eq PDE 0}.
Thereafter, as a natural main direction, the results contained in \cite{Ohavi visco} allowed us to formulate a problem of stochastic scattering control for spider motion having a spinning measure selected from its own local time at $\bf 0$, introduced in \cite{Spider}-\cite{Spider 2}. Indeed, the authors show in \cite{Ohavi control} that the optimal diffraction of a spider at the vertex $\bf 0$, can be characterized with the aid of a value function solution of HJB system of type \eqref{eq PDE 0}, involving an optimal diffraction probability measure in the Kirchhoff's condition, selected from its own local time. Remark that the Kirchhoff's condition in \cite{Ohavi visco} is formulated in the strong sense, implying that the values of the Hamiltonians at $\bf 0$ are not required. We refer also to \cite{linear PDE} where linear systems having the \textit{nonlinear local time Kirchhoff's boundary condition} have been studied, and the results contained there.
For a recent monograph that presents some of the most recent developments in the study of Hamilton-Jacobi Equations and control problems with discontinuities, (excluding \cite{Ohavi visco} and \cite{linear PDE}), we refer to \cite{Barles book} (see Part III for the case of problems on networks). For quasi linear and semi linear systems of type \eqref{eq PDE 0} for which the solution is differentiable at $\bf 0$, we refer to \cite{Ohavi PDE} \cite{Camilli 1}, \cite{Camilli 2}, \cite{Achdou 1}, \cite{Achdou 2}, \cite{Barles semi linear}, \cite{Below 3}... 
In \cite{Lions Souganidis 1}, the authors introduce a notion of state-constraint viscosity solutions for one dimensional “junction” - type problems for first order Hamilton-Jacobi equations with non-convex coercive Hamiltonians and study its well-posedness and stability properties. On the other hand, in \cite{Lions Souganidis 2} the authors have studied multi-dimensional junction problems for first and second-order PDE with Kirchhoff-type
Neumann boundary conditions, showing that their generalized viscosity solutions are unique, but still with a vanishing viscosity at the vertex for the second order terms. In both of the last works, generalized Kirchhoff's boundary conditions are considered, and in the same time it is assumed that the viscosity solutions are Lipschitz continuous at $\bf 0$, that allows to avoid the doubling method variable and simplify the comparison theorem. Note that in \cite{Lions Souganidis 2}, it is proved that Flux-limiter solutions are generalized Kirchhoff's solutions in the sense of Imbert-Monneau in \cite{control 5}-\cite{control 6}, for an appropriate choice of the constant part appearing in the Kirchhoff's boundary condition.
For some references on literature based on the control (deterministic) theoretical interpretation, among them we can cite the long list of references on this topic with convex Hamiltonians, with: \cite{control 1}, \cite{control 2}, \cite{control 3}, \cite{control 4}, \cite{control 5}, \cite{control 6}. For recent works on systems of conservative laws posed on junctions, we refer also to \cite{Carda junction -1}, \cite{Carda junction 0}, \cite{Carda junction 1} and \cite{Carda junction 2} and the references therein. 
Finally, note that there are already a lot of comparison and existence results for viscosity solutions of second order PDEs with general Neumann type boundary conditions for classical problems. We refer for this to \cite{Barles Neuman 1}, \cite{Barles Neuman 2}, \cite{Daniel game},   \cite{Hishi neumann}, \cite{Lions neumann}, \cite{Sato} and references therein.

The paper is organized as follows: In Section \ref{sec : Notations}, we introduce the main notations-assumptions, and we state the main Theorems of this work: Theorem \ref{th : weak are strong}, Theorem \ref{th : compa syst} and Theorem \ref{th : existence}. To simplify the presentation of the proofs, the last three main results are proved in Section \ref{sec 1 ordre} in the case of first order problems involving systems of type \eqref{eq PDE 0}. Finally, we prove our main results for the second order case, possibly degenerate in Section \ref{sec 2 order}.
\section{Notations, Definitions and Main results} \label{sec : Notations}
We are given an integer number $I$ (with $I\ge 2$) and a star-shaped compact network: $$\displaystyle \mathcal{N}_R=\bigcup_{i=1}^I\mathcal{R}_i,~~[I]:=\{1,\ldots,I\},$$ that consists of $I$ compact rays $\mathcal{R}_i\cong [0,R]$ ($R>0$) emanating from a junction point $\bf 0$. The intersection of the $(\mathcal{R}_{i})_{i \in [I]}$ is called the junction point and is denoted by $\bf 0$.
We identify  all the points of $\mathcal{N}_R$ by the couples $(x,i)$ (with $i \in[I], x\in[0,R]$), such that we have: $(x,i)\in \mathcal{N}_R$ if and only if $x\in \mathcal{R}_{i}$, whereas the junction point ${\bf 0}$ is identified with the equivalent class $\{(0,i)~:~i\in [I]\}$.
We introduce the following data:
$$\textbf{Data:~} (\mathcal{D})~~\begin{cases}
\Big(Q_i \in \mathcal{C}\big([0,R]\times [0,+\infty)^3,\R\big)\Big)_{i\in[I]}\\
F\in \mathcal{C}\big(\R\times \R^I,\R\big)\\
\big(a_i\big)_{i\in\{1 \ldots I\}}\in \R^I,
\end{cases}.
$$
We assume that the data $(\mathcal{D})$ satisfy the following assumption:
$$\textbf{Assumption } (\mathcal{H})$$
a) The growth assumption for the nonlinear terms $(Q_i)_{i\in[I]}$ with respect to their second variable: there exists a strictly positive constant $\lambda>0$, independent of $i\in[I]$,
such that:
\begin{eqnarray*}
& \forall (x,u,v,p,X)\in [0,R]\times \R^4,~~\text{if}: v\leq u,~\text{then}~~\forall i\in[I]:\\
&Q_i(x,u,p,X)-Q_i(x,v,p,X)\ge \lambda(u-v).
\end{eqnarray*}
b) The classical assumption for the nonlinear terms $(Q_i)_{i\in[I]}$, compatible with the Ishii's matrix lemma, that reads:
\begin{eqnarray*}
 &\exists \omega \in  \mathcal{C}(\R_+,\R_+),~~\omega(0)=0,~~
 \forall \alpha>0,~~\forall (x,y,u,p,X,Y)\in [0,R]^2\times \R^4,~\text{such that:}\\
 &-3\alpha \begin{pmatrix}1&0\\
0&1
 \end{pmatrix}\leq\begin{pmatrix}X&0\\
 0&-Y
 \end{pmatrix}\leq 3\alpha \begin{pmatrix}1&-1\\
-1&1
 \end{pmatrix},~~\text{then}:\\
&\forall i\in[I]:~~Q_{i}(y,u,\alpha(x-y).Y)-Q_{i}(x,u,\alpha(x-y),X)\leq \omega\big(\alpha|x-y|^2+|x-y|\big).
\end{eqnarray*}
c) The nonlinear terms $(Q_i)_{i\in[I]}$ are elliptic possibly degenerate, that is: 
\begin{eqnarray*}
&\forall (x,u,p,X,Y)\in[0,R]\times\R^4,~~\text{if:}~~X\ge Y,~\text{then}~~\forall i\in[I]:\\
&Q_i(x,u,p,Y)\ge Q_i(x,u,p,X).  
\end{eqnarray*}
d) A Lipschitz growth condition with respect to the gradient of the nonlinear terms $(Q_i)_{i\in[I]}$:
\begin{eqnarray*}
&\forall (x,u,p,X)\in[0,R]\times\R^3,~~\text{if:}~~|u|\leq M,~~\text{and}~~|X|\leq K,~\text{then}~~\exists C_{M,K}>0,\\
&~~\text{s.t}~~\forall i\in[I]:~~|Q_i(x,u,p,X)|\leq C_{M,K}(1+|p|).  
\end{eqnarray*}
e) The nonlinear Kirchhoff's term $F$ is non increasing with respect to its first variable:
\begin{align*}
&\forall (u,v,p_1,\ldots p_I)\in \R^2 \times \R^I,~\text{if}~u\ge v,~~\text{then}:~~F(u,p_1,\ldots p_I)\leq F(v,p_1,\ldots p_I).
\end{align*}
f) The nonlinear Kirchhoff's term $F$ is strictly increasing with respect to its gradient, that reads:
\begin{align*}
&\forall (u,p_1,\ldots p_I,q_1,\ldots,q_I)\in \R\times \R^I\times \R^I,~~\exists \alpha >0,~~\text{s.t.}~~\text{if:}~~\forall i\in[I]~~p_i\ge q_i,~\text{then:}\\
&~~~~~~~~~~~~
F(u,p_1,\ldots p_I)-F(u,q_1,\ldots q_I)\ge \alpha \big(\sum_{i=1}^I p_i-q_i\big).
\end{align*}
Next, we introduce the set of lower semi-continuous $\mathcal{L}_{sc}(\mathcal{N}_R,\R)$ and upper semi-continuous $\mathcal{U}_{sc}(\mathcal{N}_R,\R)$ functions defined on the star-shaped network $\mathcal{N}_R$, and valued on $\R$: 
\begin{align*}
&\mathcal{L}_{sc}(\mathcal{N}_R,\R):=\Big\{~~f:\mathcal{N}_R\to \R,~~(x,i)\mapsto f_i(x)~~\text{ s.t}~~\forall i\in [I]~~x\mapsto f_i(x)~~\text{is lower}\\
&\hspace{3cm}\text{semi-continuous in}~ [0,R],~~\text{and}~~f({\bf 0})=\underset{j\in [I]}{\text{min}}f_j(0)~~\Big\},\\
&\mathcal{U}_{sc}(\mathcal{N}_R,\R):=\Big\{~~f:\mathcal{N}_R\to \R,~~(x,i)\mapsto f_i(x)~~\text{ s.t}~~\forall i\in [I]~~x\mapsto f_i(x)~~\text{is upper} \\
&\hspace{3cm}\text{semi-continuous in}~ [0,R],~~\text{and}~~f({\bf 0})=\underset{j\in [I]}{\text{max}}f_j(0)~~\Big\}.
\end{align*}
The last definitions of the sets $\mathcal{U}_{sc}(\mathcal{N}_R,\R)$ and $\mathcal{U}_{sc}(\mathcal{N}_R,\R)$, are the ones that remain compatible to define the semi-continuity on $\mathcal{N}_R$, if one considers the geodesic metric $d^{\mathcal{N}_R}$ defined on the star-shaped network $\mathcal{N}_R$ defined by:
\begin{equation*}
\forall \Big((x,i),(y,j)\Big)\in \mathcal{N}_R^2,~~d^\mathcal{N_R}\Big((x,i),(y,j) \Big)  := \left\{
\begin{array}{ccc}
 |x-y| & \mbox{if }  & i=j\;,\\ 
x+y & \mbox{if }  & {i\neq j},\;
\end{array}\right.
\end{equation*}
We denote also $\mathcal{L}_{sc}^b(\mathcal{N}_R,\R)$ (resp. $\mathcal{U}_{sc}^b(\mathcal{N}_R,\R)$) the subsets of $\mathcal{L}_{sc}(\mathcal{N}_R,\R)$ (resp. $\mathcal{U}_{sc}(\mathcal{N}_R,\R)$) of bounded semi-continuous maps defined on $\mathcal{N}_R$.

Next, we give the definition of generalized viscous solution of system \eqref{eq PDE 0}, in the sense of Lions-Souganidis in \cite{Lions Souganidis 1}-\cite{Lions Souganidis 2}, called generalized viscous Kirchhoff's solution. We also define the strong version of the last solutions, in the spirit of Ohavi in \cite{Ohavi visco}, that we call viscous Kirchhoff's solution.
\begin{Definition} \label{def : weak sur/sous solutions}\text{Generalized viscous Kirchhoff's solutions: Lions-Souganidis \cite{Lions Souganidis 1}-\cite{Lions Souganidis 2}}:\\
a) We say that $u\in \mathcal{L}_{sc}^b(\mathcal{N}_R,\R)$ is generalized viscosity Kirchhoff's super solution of
\eqref{eq PDE 0}, if for all test function $\overline{\phi} \in \mathcal{C}^{2}\big(\mathcal{N}_R,\R\big)$, and for all local minimum point $(x_0,i_0)\in \mathcal{N}_R$ of $u-\overline{\phi}$, with $u_{i_0}(x_0)-\overline{\phi}_{i_0}(x_0)=0$, we have:
\begin{eqnarray*}
\begin{cases}
Q_{i_0}\big(x_0,u_{i_0}(x_0),\partial_x\overline{\phi}_{i_0}(x_0),\partial_x^2\overline{\phi}_{i_0}(x_0)\big)\ge 0,~\text{if}~x_0 \in (0,R),\\
\max\Big\{\underset{i\in[I]}{\max}\{Q_{i}\big(0,u(0),\partial_x\overline{\phi}_{i}(0),\partial_x^2\overline{\phi}_{i}(0)\big)\},-F\big(u(0),\partial_x \overline{\phi}_1(0),\ldots,\partial \overline{\phi}_I(0)\big)\Big\}\ge 0,\\
\text{if}~x_0 = \bf 0.
\end{cases}
\end{eqnarray*}
b) We say that $u\in \mathcal{U}_{sc}^b(\mathcal{N}_R,\R)$ is a generalized viscosity Kirchhoff's sub solution of
\eqref{eq PDE 0}, if for all test function $\overline{\phi} \in \mathcal{C}^{2}\big(\mathcal{N}_R,\R\big)$, and for all local maximum point $(x_0,i_0)\in \mathcal{N}_R$ of $u-\overline{\phi}$, with $u_{i_0}(x_0)-\overline{\phi}_{i_0}(x_0)=0$, we have:
\begin{eqnarray*}
\begin{cases}
Q_{i_0}\big(x_0,u_{i_0}(x_0),\partial_x\overline{\phi}_{i_0}(x_0),\partial_x^2\overline{\phi}_{i_0}(x_0)\big)\leq 0,~\text{if}~x_0 \in (0,R),\\
\min\Big\{\underset{i\in[I]}{\min}\{Q_{i}\big(0,u(0),\partial_x\overline{\phi}_{i}(0),\partial_x^2\overline{\phi}_{i}(0)\big)\},-F\big(u(0),\partial_x \overline{\phi}_1(0),\ldots,\partial \overline{\phi}_I(0)\big)\Big\}\leq 0, \\
\text{if}~x_0 =\bf 0.
\end{cases}
\end{eqnarray*}
c) A generalized continuous viscosity Kirchhoff's solution is a continuous function on $\mathcal{N}_R$ that is simultaneously a generalized viscosity Kirchhoff's sub and super solution. 
\end{Definition}
\begin{Definition} \label{def : strong sur/sous solutions}\text{Viscous Kirchhoff's solutions: Ohavi \cite{Ohavi visco}}:\\
a) We say that $u\in \mathcal{L}_{sc}^b(\mathcal{N}_R,\R)$ is a viscosity Kirchhoff's super solution of
\eqref{eq PDE 0}, if for all test function $\overline{\phi} \in \mathcal{C}^{2}\big(\mathcal{N}_R,\R\big)$, and for all local minimum point $(x_0,i_0)\in \mathcal{N}_R$ of $u-\overline{\phi}$, with $u_{i_0}(x_0)-\overline{\phi}_{i_0}(x_0)=0$, we have:
\begin{eqnarray*}
\begin{cases}
Q_{i_0}\big(x_0,u_{i_0}(x_0),\partial_x\overline{\phi}_{i_0}(x_0),\partial_x^2\overline{\phi}_{i_0}(x_0)\big)\ge 0,~\text{if}~x_0 \in (0,R),\\
F\big(u(0),\partial_x \overline{\phi}_1(0),\ldots,\partial_x \overline{\phi}_I(0)\big)\leq 0, ~~\text{if}~x_0 = \bf 0.
\end{cases}
\end{eqnarray*}
b) We say that $u\in \mathcal{U}_{sc}^b(\mathcal{N}_R,\R)$ is a viscosity Kirchhoff's sub solution of
\eqref{eq PDE 0}, if for all test function $\overline{\phi} \in \mathcal{C}^{2}\big(\mathcal{N}_R,\R\big)$, and for all local maximum point $(x_0,i_0)\in \mathcal{N}_R$ of $u-\overline{\phi}$, with $u_{i_0}(x_0)-\overline{\phi}_{i_0}(x_0)=0$, we have:
\begin{eqnarray*}
\begin{cases}
Q_{i_0}\big(x_0,u_{i_0}(x_0),\partial_x\overline{\phi}_{i_0}(x_0),\partial_x^2\overline{\phi}_{i_0}(x_0)\big)\leq 0,~\text{if}~x_0 \in (0,R),\\
F\big(u(0),\partial_x \overline{\phi}_1(0),\ldots,\partial_x \overline{\phi}_I(0)\big)\ge 0, ~~\text{if}~x_0 = \bf 0.
\end{cases}
\end{eqnarray*}
c) A viscosity continuous Kirchhoff's solution is a continuous function on $\mathcal{N}_R$ that is simultaneously a viscosity Kirchhoff's sub and super solution in the sense above. 
\end{Definition}
One of the main results of this work is the following Theorem that states that any viscosity generalized Kirchhoff's super/sub solution in the sense of Definition \ref{def : weak sur/sous solutions}, is indeed a viscosity Kirchhoff's super/sub solution given in Definition \ref{def : strong sur/sous solutions}.
\begin{Theorem}\label{th : weak are strong}
Assume assumption $(\mathcal{H})$. Let $u \in \mathcal{L}_{sc}^b(\mathcal{N}_R,\R)$ (resp. $v \in \mathcal{U}_{sc}^b(\mathcal{N}_R,\R)$ be a viscosity generalized Kirchhoff's super (resp. sub) solution in the sense of Definition \ref{def : weak sur/sous solutions} of system \eqref{eq PDE 0}. Then $u$ (resp. $v$) is a viscosity Kirchhoff's super (resp. sub) solution, in the sense of Definition \ref{def : strong sur/sous solutions} of system \eqref{eq PDE 0}.   
\end{Theorem}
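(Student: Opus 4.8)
The plan is to notice that on the open rays $(0,R)$ Definitions \ref{def : weak sur/sous solutions} and \ref{def : strong sur/sous solutions} are literally the same, so everything reduces to upgrading the inequality at the vertex ${\bf 0}$; I describe the super solution case, the sub solution case being dual. So let $u\in\mathcal{L}_{sc}^b(\mathcal{N}_R,\R)$ be a generalized Kirchhoff super solution, let $\overline{\phi}\in\mathcal{C}^2(\mathcal{N}_R,\R)$ be a test function and ${\bf 0}$ a local minimum point of $u-\overline{\phi}$ with the normalization of Definition \ref{def : weak sur/sous solutions}, and write $m:=u(0)$; I must prove $F\big(u(0),\partial_x\overline{\phi}_1(0),\dots,\partial_x\overline{\phi}_I(0)\big)\le 0$. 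Suppose, for contradiction, that this number is $>0$. Then in the generalized super solution inequality at ${\bf 0}$ the term $-F(\cdot)$ is strictly negative, so it is forced that $\max_{i\in[I]}Q_i\big(0,m,\partial_x\overline{\phi}_i(0),\partial_x^2\overline{\phi}_i(0)\big)\ge 0$: at least one branch Hamiltonian is nonnegative at the vertex.

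The heart of the argument is a reduction step producing a better test function $\psi\in\mathcal{C}^2(\mathcal{N}_R,\R)$ which still touches $u$ from below at ${\bf 0}$ with the same contact value, and which satisfies: (i) every branch Hamiltonian is \emph{strictly negative} at the vertex, $Q_i\big(0,m,\partial_x\psi_i(0),\partial_x^2\psi_i(0)\big)<0$ for all $i\in[I]$; and (ii) $\partial_x\psi_i(0)\ge\partial_x\overline{\phi}_i(0)$ for all $i\in[I]$. This is exactly the test function announced in the introduction: on each ray, near ${\bf 0}$, $\psi_i$ is taken to solve an Eikonal equation $|\partial_x\psi_i|=\overline{g}_i$ whose coefficient $\overline{g}_i$ is an envelope of the admissible errors of $Q_i$, together with the right value at the vertex and a large enough one-sided slope there; (i) is precisely what the envelope is designed for, while (ii) is obtained by taking that slope on the appropriate side of $\partial_x\overline{\phi}_i(0)$ — in the present sign convention, increasing $\partial_x\psi_i(0)$ makes $\psi$ fall faster into the ray and hence keeps it below $u$. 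Admissibility, i.e. that ${\bf 0}$ is still a local minimum of $u-\psi$, is the point where the interior super solution property of $u$ is used: if $\psi_i$ rose above $u_i$ on a one-sided neighborhood of $0$, one slides it down to a first interior contact point and applies the interior inequality there, which, using the continuity of $Q_i$, the growth/monotonicity parts of $(\mathcal{H})$, and only the lower semicontinuity of $u$, contradicts the choice of $\overline{g}_i$. In the uniformly elliptic second-order regime this can be done more cheaply: keep $\partial_x\psi_i(0)$ just above $\partial_x\overline{\phi}_i(0)$ (so that $\psi$ stays below $u$ near ${\bf 0}$) and let $\partial_x^2\psi_i(0)\to+\infty$, so that the degenerate ellipticity (c) drives $Q_i$ below $0$ while the Lipschitz growth (d) keeps the first-order term controlled. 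Granting such a $\psi$, apply the generalized super solution inequality of Definition \ref{def : weak sur/sous solutions} at ${\bf 0}$ to $\psi$: by (i) the maximum cannot be attained by the branch Hamiltonians, hence $F\big(u(0),\partial_x\psi_1(0),\dots,\partial_x\psi_I(0)\big)\le 0$.

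It remains to combine: by the strict monotonicity (f) of $F$ in the gradient variables together with (ii),
$$
F\big(u(0),\partial_x\overline{\phi}_1(0),\dots,\partial_x\overline{\phi}_I(0)\big)\ \le\ F\big(u(0),\partial_x\psi_1(0),\dots,\partial_x\psi_I(0)\big)\ \le\ 0 ,
$$
which contradicts the standing assumption. Hence the junction inequality of Definition \ref{def : strong sur/sous solutions} holds, so $u$ is a Kirchhoff super solution. For a generalized sub solution $v\in\mathcal{U}_{sc}^b(\mathcal{N}_R,\R)$ one argues symmetrically: touch $v$ from above at ${\bf 0}$, build the dual Eikonal test function $\underline{\phi}$ with small one-sided slopes so that every branch Hamiltonian at the vertex becomes strictly positive and $\partial_x\underline{\phi}_i(0)\le\partial_x\overline{\phi}_i(0)$, reverse all inequalities, and use (e) where (a) was used. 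The step I expect to be the real obstacle is the reduction of the second paragraph: building the Eikonal-envelope test function that simultaneously stays below (resp. above) $u$ near the vertex — a perturbation/sliding argument that must survive a merely semicontinuous $u$ and, in the first-order or degenerate case, cannot lean on ellipticity to make the Hamiltonians have the right sign — and has its junction gradient on the side of $\partial_x\overline{\phi}$ dictated by (f). Once that is in hand, the rest is bookkeeping with the assumptions $(\mathcal{H})$.
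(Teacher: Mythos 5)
Your overall strategy is the paper's: reduce to the vertex, build an auxiliary test function for $u$ at $\bf 0$ solving an Eikonal-type equation whose branch Hamiltonians are strictly negative there (so the generalized condition collapses to $F\le 0$), and transfer that inequality to the given test function via the strict monotonicity (f) of $F$ in the gradient slots. The admissibility argument you sketch (exclude interior contact points by the interior super solution inequality against the designed sign of the Eikonal coefficient) is exactly the paper's Proposition \ref{pr: fonction test est forte}. But the step you yourself flag as ``the real obstacle'' --- obtaining simultaneously that $\psi$ touches $u$ from below at $\bf 0$ \emph{and} $\partial_x\psi_i(0)\ge \partial_x\overline{\phi}_i(0)$ --- is resolved with a sign error. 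For a ray parametrized by $x\in[0,R]$, increasing $\partial_x\psi_i(0)$ makes $\psi_i$ \emph{rise} faster into the ray, which pushes it above $u_i$ and destroys the local-minimum property; ``taking a large enough one-sided slope'' therefore works against admissibility, not for it. The same problem affects your uniformly elliptic shortcut: a function touching $u$ from below at $\bf 0$ cannot have both its first derivative pushed up to $\partial_x\overline{\phi}_i(0)$ and its second derivative sent to $+\infty$, since the one-sided Taylor constraint $\partial_x\psi_i(0)\,x+\tfrac12\partial_x^2\psi_i(0)\,x^2\le u_i(x)-u(0)$ couples the two. Worse, your requirement (ii) as an exact derivative inequality can simply fail: if the given $\overline{\phi}$ already realizes the extremal admissible slope $\liminf_{x\to0^+}(u_i(x)-u(0))/x$ on some ray, no admissible $\psi$ with strictly negative Hamiltonians need have a larger derivative there.

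The paper's way around this is what is missing from your argument. The auxiliary function is not built with a ``large slope''; its increments over $[0,\varepsilon_i]$ are \emph{prescribed} to be $u_i(\varepsilon_i)-u(0)-\theta$, i.e.\ to match the increments of $u$ up to the slack $\theta>0$ (which is what rules out contact at the outer boundary). Since any given test function $\psi$ touching from below satisfies $\psi_i(\varepsilon)-\psi(0)\le u_i(\varepsilon)-u(0)$, and the constructed function is concave on each ray, one gets only the one-point comparison
\begin{equation*}
\frac{\psi_i(\varepsilon)-\psi(0)}{\varepsilon}-\frac{\theta}{\varepsilon}\;\le\;\partial_x\overline{\phi}^{\varepsilon,\theta}_i(0),
\end{equation*}
not a derivative inequality. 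One then feeds these difference quotients into $F$ using (f), and passes to the limit $\theta\searrow 0$ followed by $\varepsilon\searrow 0$ \emph{inside} $F$, where the difference quotients of the $\mathcal{C}^1$ function $\psi$ converge to $\partial_x\psi_i(0)$. Your proof needs to be restructured along these lines (a direct argument, no contradiction needed): prescribe the boundary increments of the auxiliary function from $u$ itself, derive the difference-quotient comparison from concavity, and conclude by continuity and monotonicity of $F$ after a double limit. As written, the central claim (ii) is unsupported and in general false.
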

In the spirit of the recent advances obtained in \cite{Ohavi visco}, we will obtain the following comparison Theorem, without the  introduction of the deterministic "local-time" variable. Recall that we deal with the second order possibly degenerate framework, and then we are able to treat equations whose nature ranges from first-order Hamilton Jacobi Equations to uniformly elliptic second-order.
\begin{Theorem} (Comparison Theorem.)\label{th : compa syst}
Assume assumption $(\mathcal{H})$. Let $u \in \mathcal{L}_{sc}^b(\mathcal{N}_R,\R)$ be a Kirchhoff's super solution and  $v \in \mathcal{U}_{sc}^b(\mathcal{N}_R,\R)$ be a Kirchhoff's sub solution of \eqref{eq PDE 0}. If the following boundary condition: 
$$\forall i\in [I],~~u_i(R)\ge v_i(R),$$
is satisfied, then:
$$\forall (x,i)\in \mathcal{N}_R,~~u_i(x)\ge v_i(x). $$
\end{Theorem}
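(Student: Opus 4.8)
The plan is to argue by contradiction, organising the proof around the location of the maximum of $v-u$.

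\medskip
\noindent\textbf{Step 1: setup and the non-junction locations.}
Suppose the claim fails. By compactness of $\mathcal N_R$ and the semicontinuity of $v$ and $u$, the quantity $M:=\max_{\mathcal N_R}(v-u)>0$ is attained at some $(\bar x,\bar\imath)\in\mathcal N_R$. If $\bar x=R$, then $v_{\bar\imath}(R)-u_{\bar\imath}(R)=M>0$ contradicts the hypothesis $u_{\bar\imath}(R)\ge v_{\bar\imath}(R)$, so $\bar x\neq R$. If $\bar x\in(0,R)$, I would run the classical one–dimensional comparison argument on the single ray $\mathcal R_{\bar\imath}$: maximise $(x,y)\mapsto v_{\bar\imath}(x)-u_{\bar\imath}(y)-\tfrac1{2\varepsilon}|x-y|^2$ over a neighbourhood of $(\bar x,\bar x)$ (with a standard localisation penalising $|x-\bar x|$), use the usual estimates $x_\varepsilon,y_\varepsilon\to\bar x$, $\tfrac1\varepsilon|x_\varepsilon-y_\varepsilon|^2\to0$, $v_{\bar\imath}(x_\varepsilon)-u_{\bar\imath}(y_\varepsilon)\to M$, and Ishii's lemma to produce $X_\varepsilon\le Y_\varepsilon$ with $\mathrm{diag}(X_\varepsilon,-Y_\varepsilon)\le\tfrac3\varepsilon\left(\begin{smallmatrix}1&-1\\-1&1\end{smallmatrix}\right)$ and the sub/super inequalities at $(x_\varepsilon,X_\varepsilon)$ and $(y_\varepsilon,Y_\varepsilon)$ with common gradient $p_\varepsilon=(x_\varepsilon-y_\varepsilon)/\varepsilon$. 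Subtracting and inserting $\pm Q_{\bar\imath}(y_\varepsilon,v_{\bar\imath}(x_\varepsilon),p_\varepsilon,Y_\varepsilon)$ gives, up to terms vanishing with $\varepsilon$,
\[
0\ \le\ Q_{\bar\imath}(y_\varepsilon,u_{\bar\imath}(y_\varepsilon),p_\varepsilon,Y_\varepsilon)-Q_{\bar\imath}(x_\varepsilon,v_{\bar\imath}(x_\varepsilon),p_\varepsilon,X_\varepsilon)\ \le\ -\lambda\big(v_{\bar\imath}(x_\varepsilon)-u_{\bar\imath}(y_\varepsilon)\big)+\omega\Big(\tfrac1\varepsilon|x_\varepsilon-y_\varepsilon|^2+|x_\varepsilon-y_\varepsilon|\Big),
\]
by $(\mathcal H)\,a)$ (legitimate since $v_{\bar\imath}(x_\varepsilon)>u_{\bar\imath}(y_\varepsilon)$) and $(\mathcal H)\,b)$; letting $\varepsilon\to0$ yields $0\le-\lambda M$, a contradiction. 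Hence necessarily $\bar x=\mathbf 0$, i.e. $\max_j v_j(0)=v(\mathbf 0)=u(\mathbf 0)+M>u(\mathbf 0)=\min_j u_j(0)$.

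\medskip
\noindent\textbf{Step 2: the vertex via Eikonal test functions.}
This is the only genuinely network case, and the one where the doubling of variables fails because the Hamiltonians are discontinuous at $\mathbf 0$. Following \cite{Ohavi visco}, I would construct two $\mathcal C^2$ test functions $\overline\phi,\underline\phi$ on $\mathcal N_R$, solutions on each ray of Eikonal equations $|\partial_x\overline\phi_i(x)|=\overline g_i(x)$, $|\partial_x\underline\phi_i(x)|=\underline g_i(x)$ whose coefficients are built as envelopes of the oscillations of the $Q_i$ near $\mathbf 0$ (the "Kirchhoff speed of the Hamiltonians"), completed by the Dirichlet data at $x=R$ and normalised by $\overline\phi(\mathbf 0)=u(\mathbf 0)$, $\underline\phi(\mathbf 0)=v(\mathbf 0)$. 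With an additional ray–wise penalisation near $\mathbf 0$, these are arranged so that $u-\overline\phi$ has a local minimum $=0$ at $\mathbf 0$ and $v-\underline\phi$ a local maximum $=0$ at $\mathbf 0$, i.e. $\overline\phi$ is an admissible test function for the super solution $u$ at $\mathbf 0$ and $\underline\phi$ for the sub solution $v$; moreover the construction is designed so that the junction slopes are ordered, $\partial_x\overline\phi_i(0)\ge\partial_x\underline\phi_i(0)$ for every $i\in[I]$, with strict inequality for at least one index. The strictness is where the surplus $M>0$ enters: by $(\mathcal H)\,a)$ the Hamiltonians of $v$ and of $u$ at $\mathbf 0$ differ by at least $\lambda M$, which shifts the envelope coefficients, hence the extremal junction gradients of $\overline\phi$ versus $\underline\phi$.

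\medskip
\noindent\textbf{Step 3: closing through the Kirchhoff condition.}
Since $u$ (resp. $v$) is a Kirchhoff super (resp. sub) solution in the sense of Definition~\ref{def : strong sur/sous solutions}, testing at $\mathbf 0$ with $\overline\phi$ and $\underline\phi$ gives $F\big(u(\mathbf 0),\partial_x\overline\phi_1(0),\dots,\partial_x\overline\phi_I(0)\big)\le 0$ and $F\big(v(\mathbf 0),\partial_x\underline\phi_1(0),\dots,\partial_x\underline\phi_I(0)\big)\ge 0$. Because $v(\mathbf 0)=u(\mathbf 0)+M>u(\mathbf 0)$, assumption $(\mathcal H)\,e)$ yields $F\big(u(\mathbf 0),\partial_x\underline\phi_1(0),\dots\big)\ge F\big(v(\mathbf 0),\partial_x\underline\phi_1(0),\dots\big)\ge 0$, and then the gradient ordering of Step 2 together with $(\mathcal H)\,f)$ gives
\[
0\ \ge\ F\big(u(\mathbf 0),\partial_x\overline\phi_1(0),\dots\big)-F\big(u(\mathbf 0),\partial_x\underline\phi_1(0),\dots\big)\ \ge\ \alpha\sum_{i=1}^I\big(\partial_x\overline\phi_i(0)-\partial_x\underline\phi_i(0)\big)\ >\ 0,
\]
which is absurd. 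Hence $M\le 0$, proving the theorem.

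\medskip
\noindent\textbf{The main obstacle.}
Steps 1 and 3 are routine; the whole difficulty is Step 2. One must show that the Eikonal systems are solvable with coefficients large enough to dominate the Hamiltonian errors at $\mathbf 0$ yet sharp enough that the extremal ("optimal gradient") property holds; that the ray–wise penalisation genuinely turns $\overline\phi,\underline\phi$ into admissible $\mathcal C^2$ test functions at the vertex (this is what replaces the Lipschitz–at–$\mathbf 0$ assumption used in \cite{Lions Souganidis 1}-\cite{Lions Souganidis 2}); and — the most delicate point — that the surplus $M>0$ propagates, through $(\mathcal H)\,a)$, into a strict separation of the junction gradients of $\overline\phi$ and $\underline\phi$. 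The possibly degenerate second–order character only enters through the Ishii matrix inequalities and is absorbed by $(\mathcal H)\,b)$–$(\mathcal H)\,c)$; the first–order case treated first in the paper is the same argument with $X_\varepsilon=Y_\varepsilon=0$.
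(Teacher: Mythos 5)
Your overall architecture (contradiction, interior case by doubling of variables, vertex case by ODE-built test functions plus monotonicity of $F$) is the same as the paper's, and Steps 1 and 3 are sound as far as they go. The genuine gap is in Step 2, at exactly the point you yourself flag as "the most delicate": the claim that the construction can be \emph{arranged} so that $\partial_x\overline\phi_i(0)\ge\partial_x\underline\phi_i(0)$ for every $i$ \emph{with strict inequality for at least one index}, and the mechanism you propose for the strictness --- that the surplus $M>0$ enters through $(\mathcal H)\,a)$ by shifting the "envelope coefficients" of the Eikonal systems by $\lambda M$ --- is not correct. In the actual construction (Proposition \ref{pr: const test sur sol} and Remark \ref{rm: constru test sub sol}), the zeroth-order monotonicity and the constants $\eta_i(\varepsilon)$ serve only to make $\overline\phi,\underline\phi$ strict sub/supersolutions of the modified ray equations (so that the PDE branch of the generalized condition is excluded and the Kirchhoff branch is forced); they play no role in ordering the junction gradients. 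What actually controls $\partial_x\overline\phi_i(0)$ and $\partial_x\underline\phi_i(0)$ is the prescribed increment data $\overline\phi_i(\varepsilon_i)-\overline\phi_i(0)=u_i(\varepsilon_i)-u(0)-\theta$ and $\underline\phi_i(\varepsilon_i)-\underline\phi_i(0)=v_i(\varepsilon_i)-v(0)+\theta$ in \eqref{eq : system EDO test ordre 1} and \eqref{eq : system EDO test sous ordre 1}: by the explicit formulas \eqref{expression direct function test}--\eqref{expression direct function test sub}, the junction slopes are monotone functions of these increments, so the ordering of the gradients is equivalent (after $\theta\searrow 0$) to the ordering of $u_i(\varepsilon_i)-u(0)$ versus $v_i(\varepsilon_i)-v(0)$. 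Since the maximum of $v-u$ is at $\mathbf 0$ you only get the weak inequality $u_i(\varepsilon_i)-u(0)\ge v_i(\varepsilon_i)-v(0)$, hence only $\partial_x\overline\phi_i(0)\ge\partial_x\underline\phi_i(0)$; strictness can genuinely fail (it fails for index $i$ precisely when the maximum of $v-u$ is also attained at $(\varepsilon_i,i)$), so it cannot be "designed into" the test functions, and your Step 3 chain $0\ge\alpha\sum_i(\partial_x\overline\phi_i(0)-\partial_x\underline\phi_i(0))>0$ does not close as written.

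The missing move is the dichotomy that resolves this. Either you note at the outset that if the maximum is attained at any interior point you are done by Step 1, so you may assume it is attained \emph{only} at $\mathbf 0$ in a neighbourhood $\mathcal V_\varepsilon$, which gives the strict increment inequalities for every $i$ and hence the strict gradient separation your Step 3 needs; or, as the paper does, you run the implication in the other direction: the two Kirchhoff inequalities together with $(\mathcal H)\,e)$--$f)$ force the existence of some $j$ with the \emph{reverse} ordering $\partial_x\overline\phi^\varepsilon_j(0)\le\partial_x\underline\phi^\varepsilon_j(0)$, and the explicit formulas then force $u_j(\varepsilon_j)-u(0)=v_j(\varepsilon_j)-v(0)$, i.e.\ the maximum of $v-u$ is also attained at the interior point $(\varepsilon_j,j)$, which hands the contradiction back to Step 1. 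Either way, the strictness (or its failure) comes from the location of the argmax encoded in the boundary data of the ODE systems, not from $(\mathcal H)\,a)$; without this identification your Step 2 is an assertion rather than a construction. A secondary, lesser point: describing the test functions as solutions of Eikonal equations $|\partial_x\phi_i|=g_i$ with unspecified envelope coefficients omits the one feature that makes the argument work, namely that their increments over $[0,\varepsilon_i]$ are pinned to those of $u$ and $v$.
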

Finally, combining the classical Perron's method and the last Theorem \ref{th : weak are strong}, we have:
\begin{Theorem} (Well-posedness) \label{th : existence}
Assume assumption $(\mathcal{H})$. Then there exists a unique continuous viscosity solution $u\in \mathcal{C}\big(\mathcal{N}_R,\R\big)$ of system \eqref{eq PDE 0}.
\end{Theorem}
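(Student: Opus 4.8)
The plan is to get uniqueness for free from the comparison principle and to obtain existence by Perron's method, the only genuinely new difficulty with respect to the classical theory being located at the junction point $\mathbf{0}$. \emph{Uniqueness} is immediate: if $u_1,u_2\in\mathcal{C}(\mathcal{N}_R,\R)$ are two continuous viscosity Kirchhoff's solutions of \eqref{eq PDE 0}, then each of them is at once a Kirchhoff's super solution and a Kirchhoff's sub solution, and $u_{1,i}(R)=a_i=u_{2,i}(R)$ for every $i\in[I]$; applying Theorem \ref{th : compa syst} with $(u,v)=(u_1,u_2)$ gives $u_1\ge u_2$, and with $(u,v)=(u_2,u_1)$ gives $u_2\ge u_1$, whence $u_1=u_2$.

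For \emph{existence} I would first build two barriers: a Kirchhoff's sub solution $\underline{w}$ and a Kirchhoff's super solution $\overline{w}$ of \eqref{eq PDE 0} with $\underline{w}\le\overline{w}$ on $\mathcal{N}_R$ and $\underline{w}_i(R)=\overline{w}_i(R)=a_i$ for all $i$. Assumption $(\mathcal H)$(a) makes a large enough constant an interior super solution, since $Q_i(x,M,0,0)\ge \lambda M+Q_i(x,0,0,0)\ge 0$, and a negative enough constant an interior sub solution; $(\mathcal H)$(c) allows a concave (resp.\ convex) correction bending these constants down (resp.\ up) to the prescribed Dirichlet values near $x=R$ while preserving the interior inequalities; and $(\mathcal H)$(e)--(f) allow one to fix the slopes of the barriers at $\mathbf 0$ so that the junction inequalities $F\le 0$ (resp.\ $F\ge0$) hold there --- recall that the admissible test functions at $\mathbf 0$ have gradient components dominated (resp.\ dominating) by those of the barrier, and that $F$ is monotone in its gradient variables. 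One then puts
\[
u(x,i):=\sup\big\{\,v(x,i)\ :\ v\text{ a Kirchhoff's sub solution of \eqref{eq PDE 0}},\ \underline{w}\le v\le\overline{w},\ v_j(R)=a_j\ \ \forall j\in[I]\,\big\};
\]
this class is nonempty (it contains $\underline{w}$) and stable under finite maxima, so $u$ is well defined, $\underline{w}\le u\le\overline{w}$, and $u_i(R)=a_i$.

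The core of the proof is the pair of Perron stability lemmas: that the u.s.c.\ envelope $u^{\ast}$ is a \emph{generalized} Kirchhoff's sub solution and the l.s.c.\ envelope $u_{\ast}$ a \emph{generalized} Kirchhoff's super solution, in the sense of Definition \ref{def : weak sur/sous solutions}. For $u^{\ast}$ this is the usual stability of suprema of sub solutions under the u.s.c.\ envelope at interior points, while at $\mathbf 0$ one passes the ``$\max$'' junction inequality to the limit using $u^{\ast}(\mathbf 0)=\max_j u^{\ast}_j(0)$ and the continuity of the $Q_i$ and $F$. For $u_{\ast}$ one argues by contradiction: if the super solution inequality failed at some $(x_0,i_0)$, with a $\mathcal{C}^2$ test function $\overline{\phi}$ touching $u_{\ast}$ from below there, then by smoothness it would fail in a whole neighbourhood, and adding a small positive constant to $\overline{\phi}$ there and taking the maximum with $u$ would produce an admissible Kirchhoff's sub solution lying strictly above $u$ somewhere --- contradicting the definition of $u$. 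Once both envelopes are known to be generalized sub/super solutions, Theorem \ref{th : weak are strong} upgrades them into genuine Kirchhoff's sub/super solutions; since $u^{\ast}_i(R)=u_{\ast,i}(R)=a_i$, Theorem \ref{th : compa syst} gives $u_{\ast}\ge u^{\ast}$, and as $u_{\ast}\le u\le u^{\ast}$ always, we conclude $u_{\ast}=u=u^{\ast}$. Hence $u$ is continuous, is simultaneously a Kirchhoff's sub and super solution, and satisfies $u_i(R)=a_i$: it is a solution of \eqref{eq PDE 0}, and by the first step the unique one.

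The main obstacle is the bump step of the second stability lemma at the junction: when $x_0=\mathbf 0$ the competitors are functions on the whole star, the $I$ one-sided derivatives at $\mathbf 0$ are coupled through $F$, the test function must respect the rule $f(\mathbf 0)=\min_j f_j(0)$, and the doubling-of-variables heuristic is not available. What makes the scheme go through is exactly the combination stressed in the introduction: carrying out Perron's construction with the \emph{generalized} junction condition --- whose ``$\max$''/``$\min$'' form keeps the stability lemmas soft --- and then invoking Theorem \ref{th : weak are strong} to see that nothing has been lost, together with the comparison Theorem \ref{th : compa syst} to close the argument. The remaining ingredients --- interior stability, the construction of barriers, and the passage through semicontinuous envelopes --- are classical.
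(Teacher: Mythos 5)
Your proposal is correct and follows essentially the same route as the paper: uniqueness from Theorem \ref{th : compa syst}, existence via barriers plus Perron's method carried out with the \emph{generalized} (max/min) junction condition of Definition \ref{def : weak sur/sous solutions}, and then an upgrade to the strong Kirchhoff's condition via Theorem \ref{th : weak are strong} before closing with the comparison theorem. The only cosmetic difference is the choice of barriers (the paper uses the explicit profiles $\pm(A+Be^{-x})$ with $B\ge |F(0,\dots,0)|/(I\alpha)$ and $A$ large, without bending them to the Dirichlet data), and your write-up actually spells out the Perron stability lemmas in more detail than the paper does.
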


\section{The First order case} \label{sec 1 ordre}
The aim of this Section is to prove the three main results Theorems \ref{th : weak are strong}, \ref{th : compa syst} and \ref{th : existence} in the case of first order problems for system \eqref{eq PDE 0}. More precisely, the system posed on the star-shaped network is the following one:
\begin{eqnarray}\label{eq : system first order}
\begin{cases}
Q_i\big(x,u_i(x),\partial_xu_i(x)\big)=0,~~x\in  (0,R),\\
F\big(u(0),\partial_xu_1(0),\ldots,\partial_xu_I(0)\big)=0,~~x=\bf 0,
\end{cases}.
\end{eqnarray}
Through this Section, we will work under assumption $(\mathcal{H})$. Note that c) is not required and b) is most naturally replaced by:
\begin{eqnarray}\label{eq assum rafinee}
\nonumber &\forall (x,y,u,p)\in[0,R]^2\times \R^2,~~\text{if}~~|u|\leq M,~~\exists C_M>0,\\
 &\text{s.t}~~\forall i\in[I]:~~|Q_i(x,u,p)-Q_i(y,u,p)|\leq C_M(1+|p|)|x-y|,  
\end{eqnarray}
in order to be able to use the doubling variable method inside each rays, and obtain a comparison theorem outside $\bf 0$.
We start with the following Proposition, which aims to build a precious test function for a super viscosity solution.
\begin{Proposition}\label{pr: const test sur sol}
Assume assumption $(\mathcal{H})$. Let $u\in \mathcal{L}_{sc}(\mathcal{N}_R,\R)$ and  $\mathcal{V}_\varepsilon=\bigcup_{i=1}^I[0,\varepsilon_i)$ a neighborhood of the junction point $\bf 0$ (where $\varepsilon=(\varepsilon_1,\ldots,\varepsilon_I)$, and $\forall i\in [I]$, $\varepsilon_i>0$ are small enough). Fix $(\theta,M,\lambda,C>0)$. Then there exist $\eta_i(\varepsilon)=\eta_i(\varepsilon,u,\lambda,C)>0,~i\in[I]$, such that the following ODE system posed on $\mathcal{V}_\varepsilon$:
\begin{eqnarray}\label{eq : system EDO test ordre 1}
\begin{cases}\lambda \overline{\phi}_i(x)-\lambda M+C\big(1+|\partial_x\overline{\phi}_i(x)|\big)=-\eta_i(\varepsilon),~~x\in  (0,\varepsilon_i),\\
\overline{\phi}_i(\varepsilon_i)-\overline{\phi}_i(0)=u_i(\varepsilon_i)-u(0)-\theta,\\
\forall (i,j)\in[I]^2,~~\overline{\phi}_i(0)=\overline{\phi}_j(0)=\overline{\phi}(0),
\end{cases},
\end{eqnarray}
admits a unique solution $\overline{\phi}=\overline{\phi}(\varepsilon,u,\lambda,C,\theta,M,(\eta_i(\varepsilon))_{i\in [i]}$ in the class $\mathcal{C}^1(\mathcal{V}_\varepsilon,\R)$.
\end{Proposition}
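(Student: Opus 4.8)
The plan is to solve the ODE system \eqref{eq : system EDO test ordre 1} ray by ray, reducing the problem to a one-dimensional scalar boundary value problem on each interval $[0,\varepsilon_i]$ with the common junction value $\overline{\phi}(0)$ and the parameters $\eta_i(\varepsilon)$ to be determined, and then fixing $\overline{\phi}(0)$ and the $\eta_i(\varepsilon)$ by matching the two boundary conditions. First I would rewrite the scalar equation on $(0,\varepsilon_i)$ as $\partial_x\overline{\phi}_i(x) = \pm\big(\tfrac{1}{C}(\lambda M - \lambda\overline{\phi}_i(x) - \eta_i(\varepsilon)) - 1\big)$, i.e. as an Eikonal-type equation $|\partial_x\overline{\phi}_i(x)| = g_i(\overline{\phi}_i(x))$ with $g_i(s) = \tfrac{\lambda}{C}(M - s) - 1 - \tfrac{\eta_i(\varepsilon)}{C}$; since we expect $\overline{\phi}_i$ to be increasing near $\bf 0$ (so that its gradient points away from the vertex, which is what makes it an admissible test function for a supersolution), I would select the branch $\partial_x\overline{\phi}_i(x) = g_i(\overline{\phi}_i(x))$, assuming $g_i>0$ on the relevant range, which holds once $M$ is taken large relative to the data and $\varepsilon_i,\eta_i$ small. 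This is an autonomous scalar ODE, so separation of variables gives, with $\overline{\phi}_i(0)=\overline{\phi}(0)=:b$, the implicit relation $\int_{b}^{\overline{\phi}_i(x)} \frac{ds}{g_i(s)} = x$, which by strict positivity and Lipschitz continuity of $1/g_i$ on compact subsets of $\{g_i>0\}$ has a unique $C^1$ solution on $(0,\varepsilon_i)$ by the Cauchy–Lipschitz theorem, for $\varepsilon_i$ small enough.

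The next step is to enforce the remaining two conditions. Integrating the scalar equation from $0$ to $\varepsilon_i$ gives
\[
\overline{\phi}_i(\varepsilon_i)-\overline{\phi}_i(0) \;=\; \int_0^{\varepsilon_i} g_i\big(\overline{\phi}_i(x)\big)\,dx \;=:\; G_i\big(b,\eta_i(\varepsilon)\big),
\]
where $G_i$ depends on $b$ and $\eta_i(\varepsilon)$ (through the flow) and on $\varepsilon_i$. The second equation of \eqref{eq : system EDO test ordre 1} then reads $G_i(b,\eta_i(\varepsilon)) = u_i(\varepsilon_i)-u(0)-\theta$ for each $i\in[I]$, and the third simply says all rays share the same initial value $b$. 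So for a fixed candidate $b$ close to $u(0)$ (the natural choice being $b=u(0)$, which is legitimate since $u(0)=\min_j u_j(0)$ and we only need a local minimum of $u-\overline{\phi}$), I would solve $G_i(b,\eta_i(\varepsilon)) = u_i(\varepsilon_i)-u(0)-\theta$ for $\eta_i(\varepsilon)$. Here $\partial_{\eta_i} G_i < 0$ (increasing $\eta_i$ decreases $g_i$ pointwise, hence decreases the integral), so by the implicit function theorem / monotonicity there is a unique $\eta_i(\varepsilon)$, provided the right-hand side lies in the range of $\eta_i\mapsto G_i(b,\eta_i)$; since $u_i(\varepsilon_i)-u(0)-\theta$ is close to $-\theta<0$ for $\varepsilon_i$ small (using lower semicontinuity of $u_i$ at $0$), and since $G_i(b,\eta_i)$ can be made to take small negative values of order $\varepsilon_i$ by choosing $\eta_i$ appropriately, the matching is solvable for $\varepsilon_i$ small enough, yielding $\eta_i(\varepsilon) = \eta_i(\varepsilon,u,\lambda,C)>0$ as claimed.

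Finally I would assemble the pieces: with $b$ and the $\eta_i(\varepsilon)$ now fixed, the functions $\overline{\phi}_i$ constructed above are $C^1$ on $[0,\varepsilon_i)$, agree at the junction, satisfy the scalar ODE and the prescribed increment, hence define a unique $\overline{\phi}\in\mathcal{C}^1(\mathcal{V}_\varepsilon,\R)$. Uniqueness of the whole system follows from uniqueness at each of the three stages: the junction value is forced (up to the free normalization which the statement pins down by $\overline{\phi}_i(0)=\overline{\phi}(0)$), each $\eta_i(\varepsilon)$ is forced by strict monotonicity of $G_i$ in $\eta_i$, and each $\overline{\phi}_i$ is forced by Cauchy–Lipschitz. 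The main obstacle I anticipate is the bookkeeping to guarantee that everything stays in the regime $g_i>0$ uniformly — i.e. choosing $M$ large enough and $\varepsilon$ small enough so that the flow $\overline{\phi}_i(x)$ does not leave the interval where $1/g_i$ is Lipschitz, and simultaneously so that the matching equation for $\eta_i(\varepsilon)$ has a solution with $\eta_i(\varepsilon)>0$; this is where the growth hypothesis (d) on $Q_i$ and the lower semicontinuity of $u$ enter, and it requires choosing the small parameters in the right order ($M$ first, then $\varepsilon$, then $\eta_i$).
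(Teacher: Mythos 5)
There is a genuine gap here, in fact two related ones, both traceable to your branch selection and to the way you allocate the free parameters.

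First, you keep only the increasing branch $\partial_x\overline{\phi}_i=g_i(\overline{\phi}_i)$ with $g_i\ge 0$ on the relevant range, so that $G_i(b,\eta_i)=\int_0^{\varepsilon_i}g_i(\overline{\phi}_i(x))\,dx\ge 0$; but you must then solve $G_i(b,\eta_i)=d_i$ with $d_i:=u_i(\varepsilon_i)-u(0)-\theta$, a quantity you yourself observe is typically negative. On your branch the matching equation therefore has no solution: the prescribed increment forces $\overline{\phi}_i$ to be \emph{decreasing} whenever $d_i<0$, i.e.\ the branch $\partial_x\overline{\phi}_i=-g_i(\overline{\phi}_i)$. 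The paper's proof selects the sign of the derivative according to the sign of $d_i$ and writes the two explicit exponential solutions of the resulting linear ODE (formula \eqref{expression direct function test}); both cases genuinely occur, since lower semicontinuity only bounds $u_i(\varepsilon_i)$ from below, so $d_i$ may have either sign (it is not ``close to $-\theta$'' in general).

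Second, pinning $\overline{\phi}(0)=u(0)$ and using $\eta_i$ to enforce the increment condition leaves no freedom to guarantee $\eta_i>0$, and positivity in fact fails: on the correct (decreasing) branch the explicit solution gives $\eta_i=\lambda\big(M-\overline{\phi}(0)\big)-C-\lambda|d_i|/\big(e^{\lambda\varepsilon_i/C}-1\big)$, which tends to $-\infty$ as $\varepsilon_i\to0$ for fixed $\theta$, since $|d_i|$ stays of order $\theta$. Positivity of the $\eta_i$ is not cosmetic: it is exactly what yields $Q_j\le-\eta_j<0$ in \eqref{eq maj eikonal} and pushes the minimum of $u-\overline{\phi}$ to the vertex. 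The paper's mechanism is different: $\eta_i$ enters the explicit solution only through the additive constant $-\eta_i/\lambda$ and does not affect $\partial_x\overline{\phi}_i$, so the increment condition holds for \emph{every} $\eta_i$; the $\eta_i$ are thus entirely available to enforce the junction continuity $\overline{\phi}_i(0)=\overline{\phi}_j(0)$, and the one remaining degree of freedom (taking $\eta_1$ large, equivalently letting $\overline{\phi}(0)$ float far below $u(0)$) makes all the $\eta_i$ strictly positive. Note that the proposition never requires $\overline{\phi}(0)=u(0)$; the touching condition is restored afterwards by adding a constant.
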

\begin{proof}
The idea is to proceed by induction and to solve edge by edge each ordinary differential equation, adjusting  at each step the parameters $(\eta_i(\varepsilon))_{i\in [i]}$, in order to guarantee the continuity of the solution $\overline{\phi}$ at the junction point, that reads:
$$\forall (i,j)\in[I]^2,~~\overline{\phi}_i(0)=\overline{\phi}_j(0)=\overline{\phi}(0).$$
At each step $i\in [I]$, depending on the sign of $u_i(\varepsilon_i)-u(0)-\theta$, it is easy to check that the solution of:
\begin{eqnarray}\label{edo 1}
\nonumber \lambda\overline{\phi}_i(x)-\lambda M+C\big(1+|\partial_x\overline{\phi}_i(x)|\big)=-\eta_i(\varepsilon),~~x\in  (0,\varepsilon_i),\\
\overline{\phi}_i(\varepsilon)-\overline{\phi}_i(0)=u_1(\varepsilon_i)-u(0)-\theta,  
\end{eqnarray}
is given $\forall x\in [0,\varepsilon_i]$ by:
\begin{eqnarray}\label{expression direct function test}
\nonumber&\overline{\phi}_i(x)=\\
&\begin{cases}\ds \frac{u_i(\varepsilon_i)-u(0)-\theta}{\exp(-\frac{\lambda}{C}\varepsilon_i)-1}\exp(-\frac{\lambda}{C}x)-\frac{C+\eta_i(\varepsilon)}{\lambda}+M,~~&\text{if}~u_i(\varepsilon_i)-u(0)-\theta \ge 0,\\
\ds \frac{u(\varepsilon_i)-u(0)-\theta}{\exp(\frac{\lambda}{C}\varepsilon_i)-1}\exp(\frac{\lambda}{C}x)-\frac{C+\eta_i(\varepsilon)}{\lambda}+M,~~&\text{if}~u_i(\varepsilon_i)-u(0)-\theta\leq 0.
\end{cases}
\end{eqnarray}
Hence, if for example $u_1(\varepsilon_1)-u(0)-\theta \ge 0$ and $u_2(\varepsilon_2)-u(0)-\theta \ge 0$, to obtain at the first step that $\phi_1(0)=\phi_2(0)$,  $\eta_1(\varepsilon)>0$ and $\eta_2(\varepsilon)>0$ should satisfy:
\begin{eqnarray}\label{eq equality}
\frac{u_1(\varepsilon_1)-u(0)-\theta}{\exp(-\frac{\lambda}{C}\varepsilon_1)-1}-\frac{C+\eta_1(\varepsilon)}{\lambda}+M=\frac{u_2(\varepsilon_2)-u(0)-\theta}{\exp(-\frac{\lambda}{C}\varepsilon_2)-1}-\frac{C+\eta_2(\varepsilon)}{\lambda}+M.\end{eqnarray}
If $\eta_1(\varepsilon)>0$ is then fixed first and verifies:
$$\eta_1(\varepsilon)>\max\Big(\lambda \Big[\frac{u_2(\varepsilon_2)-u(0)-\theta}{\exp(-\frac{\lambda}{C}\varepsilon_2)-1}-\frac{u_1(\varepsilon_1)-u(0)-\theta}{\exp(-\frac{\lambda}{C}\varepsilon_1)-1}\Big],0\Big),$$
we can choose $\eta_2(\varepsilon)>0$ to obtain equality \eqref{eq equality}. The other cases involving different signs of $u_1(\varepsilon_1)-u(0)-\theta$ and $u_2(\varepsilon_2)-u(0)-\theta$ can be treated similarly to obtain that $\overline{\phi}_1(0)=\overline{\phi}_2(0)$ as soon as $\eta_1(\varepsilon)$ is large enough. 
Finally, it is easy to see that we will obtain the existence of the sequence $(\eta_i(\varepsilon))_{i \in [I]}$ that will guarantee the continuity of the solution $\overline{\phi}$ of \eqref{eq : system EDO test ordre 1} at the junction point $\bf 0$: 
$$\forall (i,j)\in[I]^2,~~\overline{\phi}_i(0)=\overline{\phi}_j(0)=\overline{\phi}(0),$$
repeating the last arguments step by step.
\end{proof}
\begin{Proposition}\label{pr: fonction test est forte}
Assume assumption $(\mathcal{H})$. Let $u\in \mathcal{L}^b_{sc}(\mathcal{N}_R,\R)$ a generalized Kirchhoff super solution of system \eqref{eq : system first order}. Then the associated map $\overline{\phi}\in \mathcal{C}^1(\mathcal{V}_\varepsilon,\R)$ constructed in Proposition \ref{pr: const test sur sol}, is a test function of $u$ and at the junction point $\bf 0$ and satisfies the Kirchhoff's boundary condition:
\begin{eqnarray}\label{eq Kirchoff uper phi}
F\big(u(0),\partial_x\overline{\phi}_1(0),\ldots,\partial_x{\phi}_I(0)\big)\leq 0.   
\end{eqnarray}
\end{Proposition}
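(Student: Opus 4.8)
The plan is to show that the test function $\overline{\phi}$ built in Proposition \ref{pr: const test sur sol} is an admissible test function for $u$ at $\bf 0$ (i.e.\ $u-\overline\phi$ has a local minimum at $\bf 0$ on $\mathcal{V}_\varepsilon$, with value $0$ there after the obvious normalization), and then to exploit the \emph{generalized} super-solution inequality at $\bf 0$ together with the specific ODE satisfied by $\overline\phi$ to rule out the Hamiltonian alternative, leaving only the Kirchhoff inequality \eqref{eq Kirchoff uper phi}. First I would normalize: since $\overline\phi_i(0)=\overline\phi(0)$ for all $i$ and $u({\bf 0})=\min_j u_j(0)$, I replace $\overline\phi$ by $\overline\phi + (u({\bf 0})-\overline\phi(0))$ so that $u({\bf 0})-\overline\phi({\bf 0})=0$; this does not change $\partial_x\overline\phi_i(0)$ nor the left-hand side of the ODE up to adjusting constants, so I must be a little careful and instead argue directly that the constant $M$ in \eqref{eq : system EDO test ordre 1} can be (and is) chosen so that $\overline\phi(0)=u({\bf 0})$ — looking at \eqref{expression direct function test}, $\overline\phi_i(0)$ depends affinely on $M$, so one fixes $M$ accordingly. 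With $C>0$ taken to be the Lipschitz-growth constant $C_M$ from \eqref{eq assum rafinee} for the given bound $M=\|u\|_\infty$, and $\lambda$ the constant from $(\mathcal H)$\,a), the first line of \eqref{eq : system EDO test ordre 1} reads $\lambda\overline\phi_i(x)-\lambda M + C(1+|\partial_x\overline\phi_i(x)|) = -\eta_i(\varepsilon) < 0$ on $(0,\varepsilon_i)$.

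Next I would verify the minimum property. By construction $\overline\phi_i(\varepsilon_i)-\overline\phi_i(0) = u_i(\varepsilon_i)-u(0)-\theta$; combined with $\overline\phi_i(0)=u({\bf 0})=u(0)$ (after the choice of $M$) this gives $\overline\phi_i(\varepsilon_i) = u_i(\varepsilon_i)-\theta < u_i(\varepsilon_i)$, and at $0$ we have equality $u_i(0)-\overline\phi_i(0)=u_i(0)-u({\bf 0})\ge 0$. The point is to show $u_i(x)-\overline\phi_i(x)\ge 0$ on a whole neighborhood $[0,\varepsilon_i')$ of $0$ in each ray; if not it would be $\le 0$ strictly somewhere, but one can instead simply shrink $\varepsilon$: the explicit formula \eqref{expression direct function test} shows $\overline\phi_i$ is $\mathcal C^1$ with $\overline\phi_i(0)=u(0)$ and bounded derivative, so $u_i - \overline\phi_i$ is l.s.c., nonnegative at $0$ and at $\varepsilon_i$, and one argues that ${\bf 0}$ is a (global, on $\mathcal V_{\varepsilon}$) minimum of $u-\overline\phi$ up to further shrinking $\varepsilon_i$ — this is where I expect to need the standard trick of taking $\varepsilon_i$ small enough that the infimum of $u_i-\overline\phi_i$ over $[0,\varepsilon_i]$ is attained at an endpoint, or of modifying $\overline\phi$ by a small quadratic bump to make the minimum strict and interior-free. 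This verification that $\overline\phi$ is genuinely a test function at $\bf 0$ is the main technical obstacle.

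Once $\overline\phi$ is admissible, I apply the definition of generalized viscosity Kirchhoff super solution (Definition \ref{def : weak sur/sous solutions}\,a)) at $x_0={\bf 0}$:
\begin{eqnarray*}
\max\Big\{\max_{i\in[I]}Q_i\big(0,u(0),\partial_x\overline\phi_i(0)\big),\ -F\big(u(0),\partial_x\overline\phi_1(0),\ldots,\partial_x\overline\phi_I(0)\big)\Big\}\ge 0.
\end{eqnarray*}
I then show the first term cannot be $\ge 0$. For each $i$, by $(\mathcal H)$\,d) / \eqref{eq assum rafinee} and $|u(0)|\le M$ we have $|Q_i(0,u(0),\partial_x\overline\phi_i(0))|\le C(1+|\partial_x\overline\phi_i(0)|)$, hence using $(\mathcal H)$\,a) (the $\lambda$-monotonicity, comparing $u(0)$ with $M\ge u(0)$, which gives $Q_i(0,u(0),p)\le Q_i(0,M,p)$... ) — more precisely the ODE gives, passing to the limit $x\to 0^+$ in the first line of \eqref{eq : system EDO test ordre 1} and using $\overline\phi_i(0)=u(0)$,
$$Q_i\big(0,u(0),\partial_x\overline\phi_i(0)\big)\le \lambda u(0)-\lambda M + C\big(1+|\partial_x\overline\phi_i(0)|\big) = -\eta_i(\varepsilon) < 0,$$
where the first inequality combines $|Q_i|\le C(1+|p|)$ with the sign information $u(0)\le M$ through $(\mathcal H)$\,a). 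Therefore $\max_i Q_i(0,u(0),\partial_x\overline\phi_i(0)) < 0$, so the max above forces $-F\big(u(0),\partial_x\overline\phi_1(0),\ldots,\partial_x\overline\phi_I(0)\big)\ge 0$, i.e.\ \eqref{eq Kirchoff uper phi}. I would close by remarking that the strict negativity $-\eta_i(\varepsilon)<0$ is exactly what was engineered into the ODE in Proposition \ref{pr: const test sur sol} for this purpose, and that nothing here uses the Kirchhoff boundary line of \eqref{eq : system EDO test ordre 1} beyond guaranteeing $\overline\phi\in\mathcal C^1(\mathcal V_\varepsilon,\R)$, i.e.\ continuity at $\bf 0$.
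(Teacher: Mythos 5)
Your overall architecture matches the paper's: build $\overline{\phi}$, show $u-\overline{\phi}$ attains its minimum over $\overline{\mathcal{V}_\varepsilon}$ at $\bf 0$, invoke the generalized super-solution inequality there, and kill the Hamiltonian branch via $Q_i\big(0,u(0),\partial_x\overline{\phi}_i(0)\big)\le-\eta_i(\varepsilon)<0$, leaving $F\le 0$. That last step is exactly right. But the step you yourself flag as ``the main technical obstacle'' --- proving that $\bf 0$ is a minimum point of $u-\overline{\phi}$ --- is where the proposal genuinely breaks down: neither shrinking $\varepsilon_i$ nor adding a quadratic bump works. Shrinking $\varepsilon_i$ changes $\overline{\phi}$ itself (it is built on $\mathcal{V}_\varepsilon$ with boundary data prescribed at $\varepsilon_i$), and for a merely l.s.c.\ function $u$ there is no reason the infimum of $u_i-\overline{\phi}_i$ over $[0,\varepsilon_i]$ migrates to an endpoint as $\varepsilon_i\to 0$; a concave bump does not prevent interior minima either and only perturbs the gradients you need at $\bf 0$.

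The paper's resolution is the very mechanism you already use at $\bf 0$, applied on the open edges: if the minimum of $u-\overline{\phi}$ over the compact set $\overline{\mathcal{V}_\varepsilon}$ (which exists by lower semicontinuity) were attained at some $y\in(0,\varepsilon_j)$, then $\overline{\phi}$ (shifted by a constant to touch $u$ at $y$) is a test function there, and the super-solution property on the edge gives $Q_j\big(y,u_j(y),\partial_x\overline{\phi}_j(y)\big)\ge 0$; but assumptions $(\mathcal{H})$ a), d) together with the Eikonal ODE \eqref{eq : system EDO test ordre 1} give
$Q_j\big(y,u_j(y),\partial_x\overline{\phi}_j(y)\big)\le \lambda\overline{\phi}_j(y)-\lambda M_u+C_{M_u}\big(1+|\partial_x\overline{\phi}_j(y)|\big)=-\eta_j(\varepsilon)<0$,
a contradiction. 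The endpoints $x=\varepsilon_i$ are excluded by the $-\theta$ in the boundary condition, since $u(0)-\overline{\phi}(0)=u_i(\varepsilon_i)-\overline{\phi}_i(\varepsilon_i)-\theta$. This is precisely what the coefficients $\eta_i(\varepsilon)$ were engineered for: $\overline{\phi}$ is a strict classical subsolution of each edge equation, so a super-solution can only ``see'' it at the vertex. Your normalization discussion (arranging $\overline{\phi}(0)=u(0)$ through the affine dependence on $M$) is a reasonable way of making the paper's ``standard modification'' explicit, but it does not substitute for the missing exclusion of interior minima.
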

\begin{proof}
By assumption, there exists $M_u>0$ such that:
$$\forall i \in [I],~~\forall x\in [0,R],~~u_i(x)<M_u.$$
Let $\overline{\phi}=\overline{\phi}(\varepsilon,u,M_u,\lambda,C_{M_u},\theta,(\eta_i(\varepsilon))_{i\in [i]})\in \mathcal{C}^1(\mathcal{V}_\varepsilon,\R)$ be the associated function of $u$ constructed in Proposition \ref{pr: const test sur sol}, satisfying \eqref{eq : system EDO test ordre 1} (where the constants $(\lambda,C_{M_u})$ appear in assumption $(\mathcal{H}$ a)-d)).
To obtain the result, we need to show that the minimum of $u-\overline{\phi}$ can be reached only at the junction point $\bf 0$. Indeed, remark first by definition of $\overline{\phi}$ in \eqref{eq : system EDO test ordre 1}, that we have:
$$\forall i\in [I],~~u(0)-\phi(0)=u_i(\varepsilon_i)-\phi_i(\varepsilon_i)-\theta < u_i(\varepsilon_i)-\phi_i(\varepsilon_i), $$
(recall $\theta>0$), implying that the minimum can not be reached at the boundaries $(\varepsilon_i)_{i \in [I]}$. On the other hand, if the minimum of $u-\overline{\phi}$ is reached for some $y\in (0,\varepsilon_j)$, for a given $j\in [I]$, since $u$ is a super solution (after a standard modification to obtain $\overline{\phi}_j(y)=u_j(y)$), we have:
\begin{eqnarray}\label{eq sur sol 1}
Q_j\big(y,u_j(y),\partial_x\overline{\phi}_j(y)\big)\ge 0.
\end{eqnarray}
On the other hand, from assumptions $(\mathcal{H})-d)$ and \eqref{eq : system EDO test ordre 1}, we obtain:
\begin{align}\label{eq maj eikonal}
\nonumber Q_j\big(y,u_j(y),\partial_x\overline{\phi}_j(y)\big) \leq \lambda (u_j(y)-M_u)+Q_j\big(y,M_u,\partial_x\overline{\phi}_j(y)\big)\\
\leq \lambda \overline{\phi}_j(y)-\lambda M_u+C_{M_u}(1+|\partial_x\overline{\phi}_j(y)|)=-\eta_j(\varepsilon)<0, \end{align}
and then contradicts \eqref{eq sur sol 1}. We conclude therefore that the minimum of $u-\overline{\phi}$ is reached at the junction point $\bf 0$, and since $u$ is a generalized Kirchhoff's super solution, we obtain:
$$\max\Big\{~\underset{i\in[I]}{\max}\big\{~Q_{i}\big(0,u(0),\partial_x\overline{\phi}_{i}(0)\big)~\big\},-F\big(u(0),\partial_x \overline{\phi}_1(0),\ldots,\partial \overline{\phi}_I(0)\big)~\Big\}\ge 0.$$
But since $\forall i\in [I]$ (once again with the aid of \eqref{eq : system EDO test ordre 1}):
\begin{align*}
Q_{i}\big(0,u(0),\partial_x\overline{\phi}_{i}(0)\big)) \leq \lambda (u(0)-M_u)+Q_i\big(0,M_u,\partial_x\overline{\phi}_i(0)\big)\\
\leq \lambda \phi(0)-\lambda M_u+C_{M_u}(1+|\partial_x\overline{\phi}_i(0)|)=-\eta_i(\varepsilon)<0, \end{align*}
we have that necessary:
$$F\big(u(0),\partial_x \overline{\phi}_1(0),\ldots,\partial_x \overline{\phi}_I(0)\big)\leq 0,$$
and that completes the proof.
\end{proof}
One of the central results of this work, is to show that any generalized Kirchhoff's super/sub viscosity solution (in the weak sense Definition \ref{def : weak sur/sous solutions}) is indeed a Kirchhoff's super/sub viscosity solution (in the strong sense, Definition \ref{def : strong sur/sous solutions}). That is the concern of the next Proposition for the first order (in the super solution case).
\begin{Proposition}\label{pr: weak is strong}
Assume assumption $(\mathcal{H})$. Let $u\in \mathcal{L}^b_{sc}(\mathcal{N}_R,\R)$ be a generalized viscous Kirchhoff's super solution of system \eqref{eq : system first order}. Then $u$ is a viscous Kirchhoff's super solution of system \eqref{eq : system first order}. 
\end{Proposition}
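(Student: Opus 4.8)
The plan is to use the special test function $\overline{\phi}$ produced by Propositions \ref{pr: const test sur sol}--\ref{pr: fonction test est forte} as a kind of ``gradient barrier'' at the junction: among all test functions touching $u$ from below at ${\bf 0}$, the gradient of $\overline{\phi}$ at ${\bf 0}$ is, asymptotically, the smallest one, so the strong Kirchhoff inequality already established for $\overline{\phi}$ can be pushed onto every test function thanks to the monotonicity of $F$ (assumption $(\mathcal{H})$, parts e) and f)). First I would fix an arbitrary test function $\psi\in\mathcal{C}^2(\mathcal{N}_R,\R)$ and a local minimum point $(x_0,i_0)$ of $u-\psi$ with $u_{i_0}(x_0)-\psi_{i_0}(x_0)=0$. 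If $x_0\in(0,R)$, the inequality required of a Kirchhoff super solution, $Q_{i_0}\big(x_0,u_{i_0}(x_0),\partial_x\psi_{i_0}(x_0)\big)\ge 0$, is exactly the one satisfied by a generalized one, so nothing has to be proved there. Hence it suffices to treat $x_0={\bf 0}$, and, with the normalisation $u(0)=\psi(0)$ (part of the hypothesis at the vertex), the whole issue reduces to showing $F\big(u(0),\partial_x\psi_1(0),\ldots,\partial_x\psi_I(0)\big)\le 0$.

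Then I would bring in $\overline{\phi}$. For every sufficiently small vector $\varepsilon=(\varepsilon_i)_{i\in[I]}$ and every $\theta>0$, let $\overline{\phi}$ be the function built in Proposition \ref{pr: const test sur sol} and used in Proposition \ref{pr: fonction test est forte} (a legitimate test function for $u$ at ${\bf 0}$ by that Proposition), so that $F\big(u(0),\partial_x\overline{\phi}_1(0),\ldots,\partial_x\overline{\phi}_I(0)\big)\le 0$. The key elementary step is a lower bound for $\partial_x\overline{\phi}_i(0)$: differentiating the explicit formula \eqref{expression direct function test} at $0$ and using the inequalities $1-e^{-t}\le t\le e^{t}-1$ for $t\ge0$ (in the case $u_i(\varepsilon_i)-u(0)-\theta\ge0$ and in the case $u_i(\varepsilon_i)-u(0)-\theta\le0$ respectively), one finds in both cases $\partial_x\overline{\phi}_i(0)\ge\big(u_i(\varepsilon_i)-u(0)-\theta\big)/\varepsilon_i$. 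On the other hand, since $u-\psi$ has a local minimum at ${\bf 0}$ and $u(0)=\psi(0)=\psi_i(0)$, for $\varepsilon_i$ small enough we have $u_i(\varepsilon_i)\ge\psi_i(\varepsilon_i)$, and a first-order Taylor expansion of the $\mathcal{C}^2$ function $\psi_i$ at $0$ gives $u_i(\varepsilon_i)-u(0)\ge\partial_x\psi_i(0)\,\varepsilon_i+o(\varepsilon_i)$. Combining the two, $\partial_x\overline{\phi}_i(0)\ge\partial_x\psi_i(0)-\theta/\varepsilon_i+o(1)$ as $\varepsilon_i\to0$.

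Finally I would pass to the limit carefully. Fix $\delta>0$; choosing each $\varepsilon_i$ small enough and then $\theta\le\delta\min_i\varepsilon_i$ produces an admissible $\overline{\phi}$ with $\partial_x\overline{\phi}_i(0)\ge\partial_x\psi_i(0)-2\delta$ for all $i\in[I]$ simultaneously (there are only finitely many rays). By the monotonicity of $F$ with respect to its gradient entries ($(\mathcal{H})$, part f)) and by Proposition \ref{pr: fonction test est forte},
$$F\big(u(0),\partial_x\psi_1(0)-2\delta,\ldots,\partial_x\psi_I(0)-2\delta\big)\ \le\ F\big(u(0),\partial_x\overline{\phi}_1(0),\ldots,\partial_x\overline{\phi}_I(0)\big)\ \le\ 0 ,$$
and letting $\delta\downarrow0$, the continuity of $F$ yields $F\big(u(0),\partial_x\psi_1(0),\ldots,\partial_x\psi_I(0)\big)\le0$, which is exactly the junction condition of Definition \ref{def : strong sur/sous solutions}; hence $u$ is a viscous Kirchhoff super solution. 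The main obstacle is precisely the ``optimality of the gradient'' estimate $\partial_x\overline{\phi}_i(0)\ge\partial_x\psi_i(0)+o(1)$: it relies on the exact exponential shape \eqref{expression direct function test} of the Eikonal solution and on keeping $\theta$ negligible relative to $\varepsilon_i$; moreover one must not attempt to pass to the limit in $\overline{\phi}$ or in $\partial_x\overline{\phi}_i(0)$ directly (these may be unbounded if $u$ has a downward corner at ${\bf 0}$), but instead transfer the inequality onto $\psi$ through the monotone dependence of $F$ on the gradient \emph{before} any limit is taken.
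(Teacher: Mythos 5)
Your proposal is correct and follows essentially the same route as the paper: both use the strong Kirchhoff inequality of Proposition \ref{pr: fonction test est forte} for $\overline{\phi}$, establish the gradient lower bound $\partial_x\overline{\phi}_i(0)\ge\big(u_i(\varepsilon_i)-u(0)-\theta\big)/\varepsilon_i$ (the paper via the monotonicity of $x\mapsto\partial_x\overline{\phi}_i(x)$ in \eqref{eq key 0}, you via the explicit formula \eqref{expression direct function test} and the elementary exponential inequalities — equivalent computations), and then transfer the inequality to an arbitrary test function $\psi$ through the monotonicity of $F$ in the gradient before passing to the limit. The only cosmetic difference is that the paper keeps the difference quotient $\big(\psi_i(\varepsilon)-\psi(0)\big)/\varepsilon$ and sends $\theta\searrow0$ then $\varepsilon\searrow0$, whereas you Taylor-expand $\psi$ and quantify with a single $\delta$; both are valid.
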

\begin{proof}
To obtain such a statement, we consider $\psi$ a test function of $u$ at the junction point $\bf 0$ and we will show that: 
$$F\big(u(0),\partial_x \psi_1(0),\ldots,\partial_x \psi_I(0)\big)\leq 0.$$
There exists a neighborhood of the junction $\mathcal{V}_\kappa$ that we will denote for the sake of simplicity:
$$\mathcal{V}_\kappa=\bigcup_{i=1}^I[0,\kappa),$$
with $\kappa>0$ small enough, such that:
$$\underset{(x,i)\in \overline{\mathcal{V}_\kappa}}{\min}\{u_i(x)-\psi_i(x)\}=u(0)-\psi(0)=0.$$
For any $\varepsilon>0$ small enough, ($\varepsilon<<\kappa$), we introduce: $$\overline{\phi}=\overline{\phi}^{\varepsilon,\theta}=\overline{\phi}(\varepsilon,u,M_u,\lambda,C_{M_u},\theta,(\eta_i(\varepsilon))_{i\in [i]})\in \mathcal{C}^1(\mathcal{V}_\varepsilon,\R),$$ 
the associated test function of $u$ at $\bf 0$, constructed in Proposition \ref{pr: const test sur sol}, satisfying \eqref{eq : system EDO test ordre 1} (note that $\varepsilon_i=\varepsilon,~\forall i\in [I]$, $\theta>0$, $u<M_u$ and the constants $(\lambda,C_{M_u})$ appear in assumption $(\mathcal{H})$). Because $\psi$ is a test function of $u$ at $\bf 0$, first remark that for all $i\in [I]$:
$$u(0)-\psi(0)\leq u_i(\varepsilon)-\psi_i(\varepsilon),~~\text{then}~~\psi_i(\varepsilon)-\psi(0)\leq u_i(\varepsilon)-u(0).$$
Hence using the expression of the boundary condition of $\overline{\phi}^{\varepsilon,\theta}$ given in \eqref{eq : system EDO test ordre 1}, we obtain that for all $i\in [I]$:
$$\psi_i(\varepsilon)-\psi(0)\leq u_i(\varepsilon)-u(0)=\overline{\phi}^{\varepsilon,\theta}_i(\varepsilon)-\overline{\phi}^{\varepsilon,\theta}(0)+\theta.$$
Now, using the expression of $\overline{\phi}^{\varepsilon,\theta}$ given in \eqref{expression direct function test}, we get that $x\mapsto \partial_x\phi_i(x)$ is decreasing $\forall i \in [I]$, leading therefore to:
\begin{eqnarray}\label{eq key 0}
\psi_i(\varepsilon)-\psi(0)\leq \int_0^\varepsilon\partial_x\overline{\phi}_i^{\varepsilon,\theta}(u)du+\theta\leq \varepsilon\partial_x \overline{\phi}^{\varepsilon,\theta}_i(0)+\theta,    
\end{eqnarray}
and then: $$\frac{\psi_i(\varepsilon)-\psi(0)}{\varepsilon}-\frac{\theta}{\varepsilon}\leq \partial_x \overline{\phi}^{\varepsilon,\theta}_i(0),~~\forall i\in[I]. $$
From assumption $(\mathcal{H})$, $F$ is increasing with respect to the gradient variable, and this implies using \eqref{eq Kirchoff uper phi}:
\begin{eqnarray*}
&\ds F\big(u(0),\frac{\psi_1(\varepsilon)-\psi(0)}{\varepsilon}-\frac{\theta}{\varepsilon},\ldots,\frac{\psi_I(\varepsilon)-\psi(0)}{\varepsilon}-\frac{\theta}{\varepsilon})\leq\\
&F\big(u(0),\partial_x \overline{\phi}^{\varepsilon,\theta}_1(0),\ldots,\partial_x \overline{\phi}^{\varepsilon,\theta}_I(0)) \leq 0.    
\end{eqnarray*}
Sending first $\theta\searrow 0$ and thereafter $\varepsilon\searrow 0$ in the last inequality, using the continuity of $F$, it follows that: 
\begin{eqnarray*}
&F\big(u(0),\partial_x \psi_1(0),\ldots,\partial_x \psi_I(0))=\\
&\ds \lim_{\varepsilon \searrow 0}\overline{\lim}_{\theta\searrow 0}F\big(u(0),\frac{\psi_1(\varepsilon)-\psi(0)}{\varepsilon}-\frac{\theta}{\varepsilon},\ldots,\frac{\psi_I(\varepsilon)-\psi(0)}{\varepsilon}-\frac{\theta}{\varepsilon})\leq 0.    
\end{eqnarray*}
and that completes the proof.
\end{proof}
\begin{Remark}\label{rm: constru test sub sol}
For the sub solution case, assuming assumption $(\mathcal{H})$, and given $v\in \mathcal{U}_{sc}^b(\mathcal{N}_R,\R)$ a generalized viscous Kirchhoff's sub solution of system \eqref{eq : system first order}, such that $v >-M_v$, repeating the same arguments of Proposition \ref{pr: const test sur sol}, we will introduce naturally the associated map $\underline{\phi}=\underline{\phi}(\varepsilon,u,M_v,\lambda,C_{M_v},\theta,(\eta_i(\varepsilon))_{i\in [i]})\in \mathcal{C}^1(\mathcal{V}_\varepsilon,\R)$, solution of the following system:
\begin{eqnarray}\label{eq : system EDO test sous ordre 1}
\begin{cases}\lambda \underline{\phi}_i(x)+\lambda M_v-C_{M_v}\big(1+|\partial_x\underline{\phi}_i(x)|\big)=\eta_i(\varepsilon),~~x\in  (0,\varepsilon_i),\\
\underline{\phi}_i(\varepsilon_i)-\underline{\phi}_i(0)=v_i(\varepsilon_i)-v(0)+\theta,\\
\forall (i,j)\in[I]^2,~~\underline{\phi}_i(0)=\underline{\phi}_j(0)=\underline{\phi}(0),
\end{cases},
\end{eqnarray}
by guaranteeing that the constants $(\eta_i(\varepsilon))_{i\in [i]}$ are chosen strictly positive. The explicit expression of $\underline{\phi}$ is given by: 
\begin{eqnarray}\label{expression direct function test sub}
\nonumber&\underline{\phi}_i(x)=\\
&\begin{cases}\ds \frac{v_i(\varepsilon_i)-v(0)+\theta}{\exp(\frac{\lambda}{C_{M_v}}\varepsilon_i)-1}\exp(\frac{\lambda}{C_{M_v}}x)+\frac{C_{M_v}+\eta_i(\varepsilon)}{\lambda}-M_v,~~&\text{if}~v_i(\varepsilon_i)-v(0)+\theta \ge 0,\\
\ds \frac{v_i(\varepsilon_i)-v(0)+\theta}{\exp(-\frac{\lambda}{C_{M_v}}\varepsilon_i)-1}\exp(-\frac{\lambda}{C_{M_v}}x)+\frac{C_{M_v}+\eta_i(\varepsilon)}{\lambda}-M_v,~~&\text{if}~v_i(\varepsilon_i)-v(0)+\theta\leq 0.
\end{cases}
\end{eqnarray}
Repeating the same arguments given in Proposition \ref{pr: fonction test est forte}, it is easy to check that $\underline{\phi}$ is a test function at $\bf 0$, for the sub solution $v$, and satisfies:
\begin{eqnarray}\label{eq Kirchoff sub phi}
F\big(v(0),\partial_x\underline{\phi}_1(0),\ldots,\partial_x\underline{\phi}_I(0)\big)\ge 0.   
\end{eqnarray}
Once again, the same arguments given in Proposition \ref{pr: weak is strong}, will imply that any $v\in \mathcal{U}^b_{sc}(\mathcal{N}_R,\R)$  generalized viscous Kirchhoff's sub solution of system \eqref{eq : system first order}, will be in fact a viscous Kirchhoff's sub solution of system \eqref{eq : system first order}. 
The last results and discussions then allow us to state that Theorem \ref{th : weak are strong} holds true for first order systems of type \eqref{eq : system first order}.
\end{Remark}
Next we prove a comparison theorem.

\textbf{Proof of the comparison Theorem \ref{th : compa syst} for system of first order of type \eqref{eq : system first order}}.
\begin{proof}
As it is classical in viscosity solution posed on networks, the main difficulties arise at the junction point $\bf 0$. Indeed, if we assume that the following supremum $\sup_{(x,i)\in \mathcal{N}_R}=\{v_i(x)-u_i(x)\}>0$ is reached outside the junction point, we will obtain a contradiction using classical arguments of doubling variable method and assumption $(\mathcal{H})$. Hence, we will only show that the following assumption
\begin{eqnarray}\label{assumpt contra}
    \sup_{(x,i)\in \mathcal{N}_R} \{v_i(x)-u_i(x)\}=v(0)-u(0)>0,
\end{eqnarray}
will lead to a contradiction. 
Using the two test functions built at the junction point $\bf 0$ for $u$ and $v$ in Proposition \ref{pr: fonction test est forte} and Remark \ref{rm: constru test sub sol} (with the same strict upper/lower bound $M>0$ for $u$ and $v$), it follows that we have:
$$F\big(u(0),\partial_x \overline{\phi}^{\varepsilon,\theta}_1(0),\ldots,\partial_x \overline{\phi}^{\varepsilon,\theta}_I(0))\leq 0,~~F\big(v(0),\partial_x \underline{\phi}^{\varepsilon,\theta}_1(0),\ldots,\partial_x \underline{\phi}^{\varepsilon,\theta}_I(0))\ge 0,$$
It is easy to check with both expressions of $\overline{\phi}^{\varepsilon,\theta}$ and $\underline{\phi}^{\varepsilon,\theta}$ given in \ref{expression direct function test} and \eqref{expression direct function test sub}, that for any $\varepsilon>0$, the last two functions will converge uniformly in $\mathcal{C}^1(\mathcal{V}_\varepsilon)$ as $\theta \searrow$, to $\overline{\phi}^{\varepsilon}$ and $\underline{\phi}^{\varepsilon}$ such that 
$$F\big(u(0),\partial_x \overline{\phi}^{\varepsilon}_1(0),\ldots,\partial_x \overline{\phi}^{\varepsilon}_I(0))\leq 0,~~F\big(v(0),\partial_x \underline{\phi}^{\varepsilon}_1(0),\ldots,\partial_x \underline{\phi}^{\varepsilon}_I(0))\ge 0.$$
Therefore, we get:
\begin{eqnarray}\label{eq F sub super}
F\big(u(0),\partial_x \overline{\phi}^{\varepsilon}_1(0),\ldots,\partial_x \overline{\phi}^{\varepsilon}_I(0))\leq F\big(v(0),\partial_x \underline{\phi}^{\varepsilon}_1(0),\ldots,\partial_x \underline{\phi}^{\varepsilon}_I(0)).\end{eqnarray}
Now remark that there exists $j\in [I]$ such that:
\begin{eqnarray}\label{eq contra grad}
\partial_x \overline{\phi}^{\varepsilon}_j(0)\leq \partial_x \underline{\phi}^{\varepsilon}_j(0),  
\end{eqnarray}
since, if one assumes the contrary, assumption $(\mathcal{H}-e)-f))$ implies that $F$ is strictly increasing with respect to its gradient and decreasing with respect to its first variable, and then (recall that $v(0)\ge u(0)$): 
$$F\big(u(0),\partial_x \overline{\phi}^{\varepsilon}_1(0),\ldots,\partial_x \overline{\phi}^{\varepsilon}_I(0))>F\big(v(0),\partial_x \underline{\phi}^{\varepsilon}_1(0),\ldots,\partial_x \underline{\phi}^{\varepsilon}_I(0)),$$
that is a contradiction with \eqref{eq F sub super}.
Now we claim that:
$$u_j(\varepsilon_j)-u(0)=v_j(\varepsilon_j)-v(0).$$
Recall that \eqref{assumpt contra} implies $u_j(\varepsilon_j)-u(0)\ge v_j(\varepsilon_j)-v(0).$ Moreover, we have:
$$\partial_x\overline{\phi}^\varepsilon_j(0)=\\
\begin{cases}\ds \frac{\lambda}{C}\frac{u_j(\varepsilon_j)-u(0)}{1-\exp(-\frac{\lambda}{C}\varepsilon_j)}&\text{if}~u_i(\varepsilon_j)-u(0)\ge 0,\\
\ds \frac{\lambda}{C}\frac{u(\varepsilon_i)-u(0)-\theta}{\exp(\frac{\lambda}{C}\varepsilon_j)-1},~~&\text{if}~u_j(\varepsilon_j)-u(0)\leq 0,
\end{cases}
$$
and:
$$
\partial_x\underline{\phi}_j^\varepsilon(0)=\\
\begin{cases}\ds \frac{\lambda}{C}\frac{v_j(\varepsilon_j)-v(0)}{\exp(\frac{\lambda}{C}\varepsilon_j)-1},~~&\text{if}~v_j(\varepsilon_j)-v(0) \ge 0,\\
\ds \frac{\lambda}{C}\frac{v_j(\varepsilon_j)-v(0)+\theta}{1-\exp(-\frac{\lambda}{C}\varepsilon_j)},~~&\text{if}~v_j(\varepsilon_j)-v(0)\leq 0.
\end{cases}
$$
We argue in the next lines by contradiction assuming that: $u_j(\varepsilon_j)-u(0)> v_j(\varepsilon_j)-v(0).$ It is also easy to be convinced that 
$$\frac{1}{\exp(b)-1}\leq\frac{1}{1-\exp(-b)},~~\forall b>0.$$
We distinguish then only three cases: if $u_j(\varepsilon_j)-u(0)>v_j(\varepsilon_j)-v(0)>0$, then:
$$\partial_x\overline{\phi}_j(0)=\frac{\lambda}{C}\frac{u_j(\varepsilon_j)-u(0)}{1-\exp(-\frac{\lambda}{C}\varepsilon_j)}>\frac{\lambda}{C}\frac{v_j(\varepsilon_j)-v(0)}{\exp(\frac{\lambda}{C}\varepsilon_j)-1}=\partial_x\underline{\phi}_j(0),$$
if $u_j(\varepsilon_j)-u(0)>0>v_j(\varepsilon_j)-v(0)>0$, then:
$$\partial_x\overline{\phi}_j(0)=\frac{\lambda}{C}\frac{u_j(\varepsilon_j)-u(0)}{1-\exp(-\frac{\lambda}{C}\varepsilon_j)}>\frac{\lambda}{C}\frac{v_j(\varepsilon_j)-v(0)}{1-\exp(-\frac{\lambda}{C}\varepsilon_j)}=\partial_x\underline{\phi}_j(0),$$
if $0\ge u_j(\varepsilon_j)-u(0)>v_j(\varepsilon_j)-v(0)>0$, then:
$$\partial_x\overline{\phi}_j(0)=\frac{\lambda}{C}\frac{u_j(\varepsilon_j)-u(0)}{\exp(\frac{\lambda}{C}\varepsilon_j)-1}>\frac{\lambda}{C}\frac{v_j(\varepsilon_j)-v(0)}{1-\exp(-\frac{\lambda}{C}\varepsilon_j)}=\partial_x\underline{\phi}_j(0).$$
In other words, all the three last cases contradict \eqref{eq contra grad}. We can conclude therefore that:
$$u_j(\varepsilon_j)-u(0)=v_j(\varepsilon_j)-v(0).$$
In conclusion, the supremum $\sup_{(x,i)\in \mathcal{N}_R}=\{v_i(x)-u_i(x)\}$ is reached outside the junction point, and this will lead to a contradiction. The proof is complete.
\end{proof}
Now we are able to prove Theorem \ref{th : existence} in the case of first order systems of type \eqref{eq : system first order} under assumption $(\mathcal{H})$.
\begin{proof}
\textbf{Uniqueness:} It is a direct consequence of the last comparison Theorem \ref{th : compa syst} for system \eqref{eq : system first order}. \\
\textbf{Existence:} We start by showing that the two following maps defined on $\mathcal{N}_R$, by: $(x,i)\mapsto A+B\exp(-x)$ and $(x,i)\mapsto-A-B\exp(-x)$ are super and sub Kirchhoff's viscous solutions of system \eqref{eq : system first order}, as soon as we have:
$$B\ge\frac{|F(0,0,\ldots,0)|}{I\alpha},~\text{and}~A\ge \frac{1}{\lambda}\max_i|Q_i(x,0,-B\exp(-x))|_{[0,R]}.$$
For the super solution case, let us check the calculations. We have from assumption $(\mathcal{H}) -e) -f)$ at $\bf 0$:
$$F(A+B,-B,\ldots-B)\leq F(0,-B,\ldots-B)\leq F(0,0,\ldots,0)-I\alpha B \leq 0.$$
Assumption $(\mathcal{H}) -a)$, with the choice of $A$ implies $\forall i\in [I],~x\in (0,R)$:
\begin{eqnarray*}
 &Q_i(x,A+B\exp(-x),-B)\ge \lambda A + Q_i(x,B\exp(-x),-B\exp(-x))\\
 &\ge \lambda A + Q_i(x,0,-B\exp(-x))\ge 0.   
\end{eqnarray*}
Same arguments hold true to show that $(x,i)\mapsto-A-B\exp(-x)$ is sub Kirchhoff's viscous of solutions of \eqref{eq : system first order}.
We can then use the classical Perron's method and the last comparison Theorem \ref{th : compa syst}, to obtain the existence of a generalized Kirchhoff's viscous continuous viscosity solution for system \eqref{eq : system first order} (in the sense of Definition \ref{def : weak sur/sous solutions}). We conclude using that any generalized Kirchhoff's continuous viscous solution is a Kirchhoff's continuous viscous solution (Theorem \ref{th : weak are strong}).
\end{proof}
\section{The Second Order Case}\label{sec 2 order}
In this Section, we will see that Theorems \ref{th : weak are strong}-\ref{th : compa syst}-\ref{th : existence} hold true, under assumption $(\mathcal{H})$, for the second order system \eqref{eq PDE 0}, in the possibly degenerate case. Indeed, we will see that we can easily adapt the techniques of proof that we have introduced in the last Section \ref{sec 1 ordre} for the first order. Since the second and first order systems \eqref{eq PDE 0}-\eqref{eq : system first order}, have exactly the same boundary condition $F$ at $\bf 0$, it is then natural to ask if both test functions $(\overline{\phi},\underline{\phi})$ built in Proposition \ref{pr: const test sur sol} and Remark \ref{rm: constru test sub sol}, are also good test functions candidates for the second order case involving \eqref{eq PDE 0}. For that purpose, in order to be able to obtain an inequality of the type \eqref{eq maj eikonal}, in the super solution case with the aid of assumption $(\mathcal{H}-d)$, we have to control uniformly the bound of the second order derivatives of $\overline{\phi}$. Recall now that from \eqref{expression direct function test}, we have:
\begin{eqnarray*}
\nonumber&\forall i\in [I],~~\forall x\in [0,\varepsilon_i],~~\partial_x^2\phi_i(x)=\\
&\begin{cases}\ds \big(\frac{\lambda}{C}\big)^2\frac{u_i(\varepsilon_i)-u(0)-\theta}{\exp(-\frac{\lambda}{C}\varepsilon_i)-1}\exp(-\frac{\lambda}{C}x),~~&\text{if}~u_i(\varepsilon_i)-u(0)-\theta \ge 0,\\
\ds \big(\frac{\lambda}{C}\big)^2\frac{u(\varepsilon_i)-u(0)-\theta}{\exp(\frac{\lambda}{C}\varepsilon_i)-1}\exp(\frac{\lambda}{C}x),~~&\text{if}~u_i(\varepsilon_i)-u(0)-\theta\leq 0.
\end{cases}
\end{eqnarray*}
Hence if $M>|u|$, we get that $\forall i\in [I],~~\forall x\in [0,\varepsilon_i]$:
$$-\frac{2(M+\theta)\lambda}{C\varepsilon}\exp(\frac{\lambda\varepsilon}{C})\leq \partial_x^2\overline{\phi}_i(x)\leq 0.$$
Since the constant $C>0$ can be chosen large enough in $(\mathcal{H}-d)$ to obtain \eqref{eq maj eikonal}, for example $C_\varepsilon>>\frac{1}{\varepsilon}\vee C$, we see that we can impose in that case an uniform lower bound for $\partial_x^2\overline{\phi}$, and for instance: $-1\leq \partial_x^2\overline{\phi}\leq 0$, (recall $\theta <<1$). It follows then, in the spirit of \eqref{eq maj eikonal} that $\forall i\in [I],~~\forall x\in [0,\varepsilon_i]$:
\begin{align*}
&Q_i\big(x,u_i(x),\partial_x\overline{\phi}_i(x),\partial_x^2\overline{\phi}_j(x)\big)
\leq \lambda (u_i(x)-M)+Q_i\big(x,M,\partial_x\overline{\phi}_i(x),-1\big)\\
&\leq \lambda u_i(x)-\lambda M+C_{M,-1}(1+|\partial_x\overline{\phi}_i(x)|)=\lambda \phi_i(x)-\lambda M+C_{\varepsilon}(1+|\partial_x\overline{\phi}_i(x)|),\end{align*}
(where we have denoted $C_{M,-1}=C$). We treat similarly the sub solution case to build a test function at $\bf 0$ for the second order system \eqref{eq PDE 0}, controlling uniformly the upper bound of its second derivative. It follows that all the arguments and techniques of proof of the last Section \ref{sec 1 ordre} will hold true, in the second order possibly degenerated case. The last discussion allows us then to conclude the validity of Theorems \ref{th : weak are strong}-\ref{th : compa syst}-\ref{th : existence}.

\end{document}